\newtheorem{theorem}{Theorem}
\newtheorem{proposition}{Proposition}
\newtheorem{lemma}{Lemma}
\newtheorem{definition}{Definition}
\newtheorem{remark}{Remark}
\title{Totally non cohomologous to zero fibrations and amenable Clifford-Klein forms of 3-symmetric spaces}
\author{Maciej Boche\'nski and Aleksy Tralle}
\begin{document}
\maketitle{}
\begin{abstract}We prove that 3-symmetric spaces of simple linear real Lie groups do not admit amenable compact Clifford-Klein forms. Our basic tool are totally non cohomologous to zero fibrations.
\end{abstract}

\section{Introduction}\label{sec:intro}
Let $G/H$ be a non-compact homogeneous space of reductive type. We say that $G/H$ admits a compact Clifford-Klein form if there exists a discrete subgroup $\Gamma\subset G$ such that $\Gamma$ acts properly on $G/H$ by left translations and the quotient $\Gamma\setminus G/H$ is compact. The problem of finding compact Clifford-Klein forms is a difficult and deep problem, which appeared as a relativistic space form problem in \cite{CM}, and was formulated as a research program in \cite{Kob}. Many approaches and partial results were obtained \cite{MA},\cite{OH},\cite{OW},\cite{SH},\cite{Th}. However, it is still open even for pseudo-Riemannian symmetric spaces  \cite{KY}. Also, it is important to understand when homogeneous spaces with other invariant geometric structures admit compact Clifford-Klein forms. For example, it was shown in \cite{KY} that {\it para-Hermitian} homogeneous  spaces do not admit them. In this respect, our questions are inspired by this result. In the same vein, Benoist and Labourie \cite{BL} derived a necessary condition for the existence of compact  Clifford-Klein forms of {\it symplectic} homogeneous spaces.   At this point we want to stress that there is yet another important family of homogeneous spaces which appear in differential geometry, the class of {\it 3-symmetric spaces} determined by automorphisms of $G$ of order 3 \cite{Kow, WG} (see subsection \ref{subsec:3-sym} for some details). The latter has many interesting geometric properties, for example, all such spaces carry a canonical almost complex structure. Thus, it is natural to ask about the possible compact Clifford-Klein forms of this class, again, considering it as a ``relativistic space form problem''.  It is important to observe, that the possible algebraic structure of $\Gamma\subset G$ yielding compact Clifford-Klein forms,  is not understood. The only complete result in this direction is a theorem of Benoist \cite{BEN} which says that no nilpotent $\Gamma$ can act properly and co-compactly on semisimple $G/H$. Following this line of thinking we want to understand {\it what algebraic properties do have subgroups $\Gamma$ which yield compact Clifford-Klein forms?} In \cite{BT1} we showed that homogeneous spaces $G/H$ of parabolic type do not admit solvable Clifford-Klein forms. In \cite{BT} we analyzed the case of pseudo-Riemannian symmetric spaces and obtained some partial results. The aim of this article is to prove that in the case of $3$-symmetric homogeneous spaces of non-compact type, the problem can be fully solved. The main result of this work is formulated as follows.
\begin{theorem}\label{thm:main} 3-Symmetric homogeneous spaces $G/H$ determined by  non-compact simple real linear Lie groups $G$ and a non-compact subgroup $H$ do not admit amenable compact Clifford-Klein forms.
\end{theorem}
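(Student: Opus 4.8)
The plan is to first replace $\Gamma$ by a well-behaved subgroup. Since $\Gamma$ acts properly discontinuously and cocompactly on the connected manifold $G/H$ it is finitely generated, and being a subgroup of a linear group it satisfies the Tits alternative; amenability then forces $\Gamma$ virtually solvable, and Selberg's lemma lets me pass to a finite-index torsion-free subgroup. After this $\Gamma$ is torsion-free, solvable, infinite (because $G/H$ is non-compact) and still defines a compact Clifford--Klein form. By the usual structure theory a finite-index subgroup of $\Gamma$, after conjugation, lies in a maximal connected solvable subgroup $AN$ coming from an Iwasawa decomposition $G=KAN$; since $G$ is non-compact semisimple, $AN$ is not unimodular, so $\Gamma$ is a \emph{non-cocompact} discrete subgroup of the simply connected solvable group $AN$, hence polycyclic. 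Consequently $\Gamma$ has a finite classifying space, $H^*(\Gamma;\mathbb Q)$ is finite-dimensional, and $\chi(\Gamma)=0$ (an infinite polycyclic group has vanishing Euler characteristic).

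\textbf{Step 2: the associated fibration.} Fix $K$ maximal compact in $G$ and conjugate $H$ so that $K_H:=K\cap H$ is maximal compact in $H$ and the Cartan involution commutes with a defining order-$3$ automorphism $\sigma$. Since $\Gamma$ is discrete and torsion-free it acts freely and properly discontinuously on $G/K$ and on $G/K_H$; as $G/K$ is contractible, $\Gamma\backslash G/K=B\Gamma$, and the projection $G/K_H\to G/K$ gives a fibre bundle
\[
K/K_H\ \longrightarrow\ \Gamma\backslash G/K_H\ \longrightarrow\ B\Gamma .
\]
Moreover $G/K_H\to G/H$ is a $G$-equivariant fibre bundle with contractible fibre $H/K_H\cong\exp(\mathfrak h\cap\mathfrak p)$, so $\Gamma\backslash G/K_H$ is homotopy equivalent to the closed manifold $\Gamma\backslash G/H$; in particular it has finite-dimensional cohomology and satisfies Poincaré duality in dimension $d=\dim G/H$.

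\textbf{Step 3: total non-cohomology to zero and the Euler-characteristic obstruction.} The displayed bundle is the pullback along $B\Gamma\to BK$ (classifying the principal $K$-bundle $\Gamma\backslash G\to B\Gamma$) of the universal bundle $K/K_H\to BK_H\to BK$, and being totally non cohomologous to zero passes to pullbacks; so it suffices to make $BK_H\to BK$ totally non cohomologous to zero over a suitable field $\mathbb k$. This is where the structure of $3$-symmetric spaces enters: $\sigma$ is inner unless $\mathfrak g$ is a real form of $\mathfrak{so}(8)$ and $\sigma$ is an order-$3$ triality. When $\sigma$ is inner, up to conjugation $\sigma=\operatorname{Ad}(\exp W)$ with $W\in\mathfrak k$, so $\mathfrak k_H=\mathfrak z_{\mathfrak k}(\exp W)$ contains a maximal torus of $\mathfrak k$, giving $\operatorname{rank}K=\operatorname{rank}K_H$; by Borel's theorem $H^*(BK;\mathbb Q)\twoheadrightarrow H^*(BK_H;\mathbb Q)$, i.e.\ TNCZ over $\mathbb Q$, whence
\[
H^*(\Gamma\backslash G/H;\mathbb Q)\ \cong\ H^*(B\Gamma;\mathbb Q)\otimes_{\mathbb Q}H^*(K/K_H;\mathbb Q)
\]
as $H^*(B\Gamma;\mathbb Q)$-modules, so that $\chi(\Gamma\backslash G/H)=\chi(\Gamma)\cdot\chi(K/K_H)=0$. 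On the other hand $\Gamma\backslash G/H$ is a compact locally homogeneous pseudo-Riemannian manifold, so by the Hirzebruch--Kobayashi--Ono proportionality principle its Euler characteristic is a nonzero multiple of $\chi(G_U/H_U)$ for the compact dual $G_U/H_U$; and $\chi(G_U/H_U)\ne 0$ because $\mathfrak h=\mathfrak g^\sigma$, being the fixed algebra of an inner automorphism, has full complex rank in $\mathfrak g$. This contradiction settles the inner case; the exceptional $\mathfrak{so}(8)$-triality case (where necessarily $\mathfrak g=\mathfrak{so}(4,4)$, $\mathfrak h=\mathfrak g_{2(2)}$, and both $\operatorname{rank}K\neq\operatorname{rank}K_H$ and $\chi(G_U/H_U)=0$) would be handled by a mod-$p$ refinement of the same fibration argument together with the corresponding $\mathbb F_p$-characteristic classes.

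\textbf{The main obstacle.} The hard part is Step 3 — proving $BK_H\to BK$ is totally non cohomologous to zero. Over $\mathbb Q$ this is the equal-rank condition $\operatorname{rank}K=\operatorname{rank}K_H$ (equivalently, that $K/K_H$ has only even rational cohomology), which must be extracted from the Wolf--Gray classification and checked for every $(\mathfrak g,\mathfrak h)$ with $H$ non-compact, isolating the triality case where the rank condition fails and a finer, torsion-sensitive argument is forced; one must also confirm along the same list that the proportionality principle applies and that $\chi(G_U/H_U)\neq 0$, i.e.\ $\operatorname{rank}_{\mathbb C}\mathfrak h=\operatorname{rank}_{\mathbb C}\mathfrak g$, throughout.
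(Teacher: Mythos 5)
Your Step 1 and the inner-automorphism part of Step 3 are essentially the paper's argument for the equal-rank case (Proposition~\ref{prop:equal-rank}): reduce amenable to solvable via the Tits alternative, get $\chi(\Gamma)=0$ from the solvable structure, use the fibration $K/K_H\to\Gamma\backslash G/K_H\to\Gamma\backslash G/K$ to conclude $\chi(\Gamma\backslash G/H)=\chi(K/K_H)\cdot\chi(\Gamma)=0$, and contradict this with the Kobayashi--Ono transfer of the Euler class, which forces $\chi(\Gamma\backslash G/H)\neq 0$ whenever $\operatorname{rank}G=\operatorname{rank}H$. (Your detour through TNCZ of $BK_H\to BK$ is not needed for the Euler-characteristic multiplicativity, but it is not wrong.) Your observation that an order-$3$ automorphism is inner except for triality on $D_4$, so that the equal-rank argument disposes of everything outside the $\mathfrak{so}(8)$ real forms, matches the paper's reduction via the Wolf--Gray table.

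The genuine gap is the outer (triality) case, which you explicitly leave unproved and moreover misenumerate. With $H$ non-compact there are \emph{three} surviving spaces, not one: $SO(4,4)/SU(1,2)$, $SO(4,4)/G_{2(2)}$ and $SO(3,5)/G_{2(2)}$ (plus $Spin(1,7)/G_2$, excluded since $G_2$ is compact). Your proposed ``mod-$p$ refinement of the same fibration argument with $\mathbb{F}_p$-characteristic classes'' is only a gesture, and it is not what actually closes these cases. The paper needs two separate, concrete arguments here. For the two $SO(4,4)$ spaces it uses the syndetic hull $B$ of $\Gamma$: unimodularity of $B$ gives $0\neq H^n(\mathfrak{g},\mathfrak{b})\hookrightarrow H^n(\mathfrak{g})$ in degree $n=\dim G/B$, which Kobayashi's criterion $d(G)=d(H)+d(B)$ pins down to $n=16$ resp.\ $n=20$; but the primitive generators of $H^*(\mathfrak{so}(8))$ sit in degrees $3,7,7,11$, and no subset sums to $16$ or $20$, so $H^{16}(\mathfrak{g})=H^{20}(\mathfrak{g})=0$, a contradiction. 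For $SO(3,5)/G_{2(2)}$ the paper invokes Morita's TNCZ obstruction and an explicit Cartan-algebra computation of $H^*(SO(8)/(SO(3)\times SO(3)))$ using the embedding $\mathfrak{t}'_H=\{x_1=0,\ x_2+x_3=x_4\}$ coming from the outer automorphism, showing $H^8(G_u/K_H)=0$ while $\dim H_u/K_H=8$, so the fundamental class of the fiber dies and the fibration cannot be TNCZ. None of this is present or replaceable by your sketch, so the proof is incomplete precisely at the cases you yourself identify as ``the main obstacle.''
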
 
\begin{remark} {\rm Note that our question on the algebraic structure of $\Gamma$ makes sense only for non-compact $H$, because for compact $H$ solvable (and, hence, amenable) compact Clifford-Klein forms do not exist for obvious reasons.}
\end{remark}
Finally, we want to stress that our main tool in this work are topological  obstructions to compact Clifford-Klein forms found by Kobayashi and Ono \cite{KO} and further developments by Morita \cite{M}. 

\section{Preliminaries}\label{sec:prelim}
\subsection{Notation}
In this work we consider only cohomology over the reals. Therefore, we omit the notation for the field of coefficients and write $H^*(X)$ for the cohomology algebra of any topological space $X$ (in fact, we consider manifolds and the de Rham cohomology). As usual, $H_*(X)$ is used for the homology of $X$. 
Throughout this paper we use the basics of Lie theory without further explanations. One can consult \cite{K}. We denote Lie groups by $G, H,...$, and their Lie algebras by the corresponding Gothic letters $\mathfrak{g},\mathfrak{h}...$.  The symbol $\mathfrak{g}^{\mathbb{C}}$ denotes the complexification of a real Lie algebra $\mathfrak{g}$. 
 Also, given a real Lie algebra $\mathfrak{g}$ we consider the Lie algebra cohomology $H^*(\mathfrak{g})$. For any subalgebra $\mathfrak{a}\subset \mathfrak{g}$ we denote by $H^*(\mathfrak{g},\mathfrak{a})$ the relative Lie algebra cohomology.
 
 Recall that {\it solvmanifolds} are the homogeneous spaces of connected solvable Lie groups. 
 
 We say that a discrete group $\Gamma$ is {\it amenable} if it admits a finitely additive probability measure.
 \subsection{Clifford-Klein forms and Kobayashi's criterion}

 Let $X$ be a Hausdorff topological space and $\Gamma$ a topological group acting on $X$. We say that an action of $\Gamma$ on $X$ is {\it proper} if for any compact subset $S\subset X$ the set
  $$\{\gamma\in\Gamma\,|\,\gamma(S)\cap S\not=\emptyset\}$$
  is compact. In particular, this article is devoted to the proper actions of a discrete subgroup $\Gamma\subset G$ on $G/H$ by left translations. 
  
 Recall that we say that $G/H$ {\it admits a compact Clifford-Klein form} if there exists a discrete subgroup $\Gamma\subset G$ acting properly and co-compactly on $G/H$. We say that a compact Clifford-Klein form is {\it solvable} (respectively, {\it amenable}), if $\Gamma$ is solvable (amenable).
 We consider homogeneous spaces $G/H$ of {\it reductive type}, which means, by definition, that $G$ is semisimple linear Lie group and there exists a Cartan involution $\theta$ on $\mathfrak{g}$ such that $\theta(\mathfrak{h})\subset\mathfrak{h}$ and that the connected Lie subgroup corresponding to $\mathfrak{h}^{\mathbb{C}}$ is closed in $Int(\mathfrak{g}^{\mathbb{C}})$. Typical examples of spaces of reductive type are semisimple symmetric and $3$-symmetric spaces.
  Using the Cartan involution, one can obtain for spaces of reductive type the {\it compatible Cartan decomposition}
  $$\mathfrak{h}=\mathfrak{k}_H+\mathfrak{p}_H, \ \mathfrak{k}_H=\mathfrak{k}\cap\mathfrak{h}, \ \mathfrak{p}_H=\mathfrak{p}\cap\mathfrak{h},$$  on the Lie algebra level and one can choose a maximal abelian subalgebra in $\mathfrak{p}_{H}$ so that $\mathfrak{a}_{H}\subset \mathfrak{a}$. 
	
   For any connected Lie group $U$ put $d(U):=dim(U)-dim(K_{U}),$ where $K_{U}$ denotes a maximal compact subgroup of $U.$ If $G/H$ is of reductive type, one can write $d(G)=\dim\mathfrak{p}$ (then, clearly, $d(H)=\dim\mathfrak{p}_H$).
We will need the following result.

\begin{theorem}[\cite{Kob},\cite{OW}]\label{thm:kob-crit}
If a closed connected subgroup $U$ acts properly and co-compactly on $G/H$ then
$$d(G)=d(H)+d(U)$$
\end{theorem}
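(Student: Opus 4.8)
The plan is to extract the equality from a single dimension count for the cohomology of $G/H$, exploiting that $G/H$ has the homotopy type of a closed manifold. Set $n(X):=\sup\{\,j : H^{j}(X;\mathbb{Z}/2)\neq 0\,\}$; this is a homotopy invariant, and it equals the dimension of a closed manifold (Poincar\'e duality over $\mathbb{Z}/2$ is insensitive to orientability). By Mostow's structure theorem for homogeneous spaces of reductive type, $G/H$ is diffeomorphic to the total space of a vector bundle over the compact manifold $K/K_{H}$ with fibre the orthogonal complement of $\mathfrak{p}_{H}$ in $\mathfrak{p}$; hence
\[ n(G/H)=\dim K/K_{H}, \qquad \dim G/H=\dim K/K_{H}+d(G)-d(H). \]
The same invariant will then be computed from the proper cocompact $U$-action.

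First I would repackage the $U$-action as an orbit fibration. Properness forces every isotropy group to be compact; fixing a maximal compact subgroup $K_{U}$ of $U$ and replacing the principal isotropy group $F$ by a $U$-conjugate, we may assume $F\subset K_{U}$. Passing to the open dense principal-orbit stratum $(G/H)_{\mathrm{pr}}$, whose complement has strictly smaller dimension, we obtain a fibre bundle $(G/H)_{\mathrm{pr}}\to B_{\mathrm{pr}}$ with fibre the single orbit $U/F$, where $B_{\mathrm{pr}}$ is an open dense manifold inside the compact quotient $B:=U\backslash G/H$. Two elementary facts feed the count: projecting $U/F$ onto $U/K_{U}\cong\mathbb{R}^{d(U)}$ (fibre $K_{U}/F$) shows $U/F$ is homotopy equivalent to the closed manifold $K_{U}/F$, so $n(U/F)=\dim K_{U}-\dim F$; and $\dim B=\dim G/H-\dim U+\dim F$.

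The core step is a Leray spectral sequence for $\pi\colon G/H\to B$ with $\mathbb{Z}/2$-coefficients. The higher direct images $R^{q}\pi_{*}\underline{\mathbb{Z}/2}$ vanish for $q>\dim K_{U}-\dim F$ (their stalks are the $\mathbb{Z}/2$-cohomologies of the orbits, all homotopy equivalent to closed manifolds of dimension at most $\dim K_{U}-\dim F$), and $B$ has $\mathbb{Z}/2$-cohomological dimension $\dim B$; so the $E_{2}$-page is confined to the rectangle $p\le\dim B$, $q\le\dim K_{U}-\dim F$, giving $n(G/H)\le\dim B+(\dim K_{U}-\dim F)$. For the reverse inequality one checks that the upper-right corner of the page survives: there the sheaf $R^{q}\pi_{*}\underline{\mathbb{Z}/2}$ restricts on $B_{\mathrm{pr}}$ to the orientation sheaf of the fibre, which over $\mathbb{Z}/2$ is the constant sheaf; since $B$ is a closed $\mathbb{Z}/2$-homology manifold of dimension $\dim B$, the relevant $H^{\dim B}(B;-)$ is nonzero, and all differentials into and out of the corner vanish for degree reasons. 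Hence $n(G/H)=\dim B+\dim K_{U}-\dim F$. Combining this with the two facts above and with the Mostow identities,
\[ \dim K/K_{H}=\big(\dim G/H-\dim U+\dim F\big)+\big(\dim K_{U}-\dim F\big)=\dim G/H-d(U)=\dim K/K_{H}+d(G)-d(H)-d(U), \]
which is precisely $d(G)=d(H)+d(U)$.

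The step I expect to be the genuine obstacle is the claim that $B=U\backslash G/H$ is a closed $\mathbb{Z}/2$-homology manifold of the expected dimension, i.e., that the (a priori possibly positive-dimensional) singular orbit strata do not lower $n(G/H)$ and that the corner of the spectral sequence really is hit. This is the point at which one must invoke the full structure theory of proper actions on homogeneous spaces of reductive type --- Kobayashi's properness criterion \cite{Kob}, the slice theorem, and the principal-orbit theorem --- the cleanest route being to first reduce to the situation in which the quotient is a genuine closed orbifold (for instance by arranging the $U$-action to be locally free, or by replacing $U$ by a group reductive in $G$ with the same $d(U)$ acting properly and cocompactly). Throughout, working with $\mathbb{Z}/2$-coefficients is what removes all orientation subtleties for free.
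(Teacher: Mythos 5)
You should first be aware that the paper contains no proof of Theorem~\ref{thm:kob-crit}: it is quoted verbatim from \cite{Kob} and \cite{OW}, so there is nothing internal to compare your argument with, and it has to stand on its own. Your overall strategy --- pin the top nonvanishing $\mathbb{Z}/2$-cohomology degree of $G/H$ at $\dim K/K_H$ via the Mostow vector-bundle decomposition, then recompute that invariant from a second fibration supplied by the proper cocompact $U$-action --- is exactly the philosophy of the published proofs, and your final bookkeeping is arithmetically correct. The division of labour is also the right one: the spectral-sequence \emph{upper} bound $n(G/H)\le\dim B+(\dim K_U-\dim F)$ uses only properness and yields $d(U)+d(H)\le d(G)$, while the survival of the corner uses compactness of $B$ and yields $d(G)\le d(H)+d(U)$.

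The gap is the one you flag yourself, and it is not a verification one ``checks'': it is the whole content of the cocompactness direction. The orbit space $B=U\backslash G/H$ is in general \emph{not} a closed $\mathbb{Z}/2$-homology manifold (orbit spaces of proper actions with nontrivial slice representations typically are not), so the nonvanishing of $H^{\dim B}(B;R^{q_{\max}}\pi_*\underline{\mathbb{Z}/2})$ is unproved; moreover the identification of the stalks of $R^{q}\pi_*$ with orbit cohomology, the equality $\dim B=\dim G/H-\dim U+\dim F$, and the connectedness and behaviour of the sheaf across exceptional and singular strata all rest on the slice and principal-orbit theorems for proper actions, which you invoke but do not carry out; the singular strata can in principle kill exactly the top compactly-supported class you need. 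Your proposed repairs (make the action locally free, or replace $U$ by a reductive subgroup with the same $d(U)$ acting properly and cocompactly) are themselves open problems rather than reductions --- no general procedure produces such a replacement. The standard proof avoids the orbit space altogether: one runs the same two-fibration count on $K/K_H\rightarrow\Gamma\backslash G/K_H\rightarrow\Gamma\backslash G/K$ for a torsion-free cocompact discrete subgroup, where the base is an honest aspherical manifold of dimension $d(G)$ and the fibre is the fixed compact manifold $K/K_H$, obtaining $d(G)-d(H)\le\operatorname{cd}(\Gamma)\le d(U)$ from cocompactness, and derives $d(U)+d(H)\le d(G)$ separately from Kobayashi's properness criterion. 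As written, your argument establishes at most the properness inequality and leaves the converse resting on the unestablished structure of $B$.
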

Recall that a homogeneous space $G/H$ is {\it reductive}, if $\mathfrak{g}$ admits a  direct $\operatorname{Ad}(H)$-invariant decomposition 
$$\mathfrak{g}=\mathfrak{h}+\mathfrak{m} \eqno (1)$$
(thus a space of reductive type is reductive). For example, if $\sigma\in\operatorname{Aut}(\mathfrak{g})$ is an automorphism of $\mathfrak{g}$ of finite order, then there is a canonical reductive decomposition $(1)$ with $\mathfrak{h}=\mathfrak{g}^{\sigma}$ and $\mathfrak{m}=A_{\sigma}\mathfrak{g}$, where $A_{\sigma}=\sigma-I$. Here and throughout this paper $I$ means the identity map. For any homogeneous space $G/H$ we denote by $\Omega^*(G/H)^G$ the (differential) graded algebra of all $G$-invariant differential forms on $G/H$. It is well known \cite{GHV},\cite{KO} that for the reductive homogeneous space $G/H$ there is a natural isomorphism
$$\Omega^*(G/H)^G\cong (\Lambda\mathfrak{m}^*)^H.$$
In the latter formula, $H$ acts on $\Lambda\mathfrak{m}^*$ by the coadjoint representation, and $(\Lambda\mathfrak{m}^*)^H$ denotes the subalgebra of the fixed points of $H$. 
The (de Rham) cohomology algebra over the reals of a {\it compact} homogeneous space $G/H$ is given by the formula
$$H^*(G/H)=H^*(\Omega^*(G/H)^G).$$ 
  
\subsection{$3$-Symmetric spaces}\label{subsec:3-sym}
The importance of {\it symmetric} spaces in geometry and analysis is well known. These are homogeneous spaces $G/H$, where $H$ is an open subgroup of the subgroup $G^{\sigma}$ of all fixed points of an {\it involutive} automorphism $\sigma\in\operatorname{Aut}(G)$.  
In this article we consider  {\it 3-symmetric spaces}, which are defined by a similar condition 
$$G^{\sigma}_0\subset H\subset G^{\sigma}$$
as for symmetric spaces, but with the requirement that $\sigma$ is an automorphism of $G$ of order 3. These spaces were classified by Wolf and Gray \cite{WG}. The geometric applications of this class are concerned with $G$-invariant almost complex structures and almost Hermitian geometry. The homogeneous spaces $G/H$ of reductive type generated by automorphisms of order 3 carry natural almost complex structures, because Wolf and Gray proved the following.
\begin{theorem}\label{thm:wg} Let $G/H$ be a (reductive) homogeneous 3-symmetric space. Consider the reductive decomposition 
$$\mathfrak{g}=\mathfrak{h}+\mathfrak{m}, \,\mathfrak{m}=A_{\sigma}\mathfrak{g}.$$
Then the canonical $G$-invariant almost complex structure on $G/H$ is determined by the formula
$$P:\mathfrak{m}\rightarrow\mathfrak{m},\,P=-{1\over 2}I+{\sqrt{3}\over 2}J,$$
where $P$ denotes the induced action $\sigma|_{\mathfrak{m}}$.
\end{theorem}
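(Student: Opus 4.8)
The plan is to reduce the statement to linear algebra on $\mathfrak{m}$, using that an automorphism of finite order is semisimple. Write $\omega=e^{2\pi i/3}$. Since $\sigma^{3}=I$, the complexified automorphism is diagonalizable with eigenvalues among $\{1,\omega,\bar\omega\}$, so $\mathfrak{g}^{\mathbb{C}}=\mathfrak{g}_{1}\oplus\mathfrak{g}_{\omega}\oplus\mathfrak{g}_{\bar\omega}$, where $\mathfrak{g}_{\lambda}=\ker(\sigma-\lambda I)$. As $\sigma$ preserves the real form $\mathfrak{g}$, complex conjugation fixes $\mathfrak{g}_{1}$ and interchanges $\mathfrak{g}_{\omega}$ and $\mathfrak{g}_{\bar\omega}$; moreover $\mathfrak{g}_{1}=\mathfrak{h}^{\mathbb{C}}$ by definition of $\mathfrak{h}=\mathfrak{g}^{\sigma}$. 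First I would observe that $A_{\sigma}=\sigma-I$ kills $\mathfrak{g}_{1}$ and is invertible on $\mathfrak{g}_{\omega}\oplus\mathfrak{g}_{\bar\omega}$, so $A_{\sigma}\mathfrak{g}$ is exactly the real form of $\mathfrak{g}_{\omega}\oplus\mathfrak{g}_{\bar\omega}$. Hence $\mathfrak{g}=\mathfrak{h}\oplus\mathfrak{m}$ is a genuine direct sum, $\mathfrak{m}^{\mathbb{C}}=\mathfrak{g}_{\omega}\oplus\mathfrak{g}_{\bar\omega}$, and $P:=\sigma|_{\mathfrak{m}}$ is a well-defined real automorphism of $\mathfrak{m}$ with no eigenvalue $1$; in particular $\dim\mathfrak{m}$ is even.

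Since the eigenvalues of $P$ on $\mathfrak{m}^{\mathbb{C}}$ are precisely $\omega,\bar\omega$ and $(x-\omega)(x-\bar\omega)=x^{2}+x+1$, the operator $P$ satisfies $P^{2}+P+I=0$ on $\mathfrak{m}$. Now set $J:=\tfrac{1}{\sqrt{3}}(2P+I)$, which is equivalent to the asserted relation $P=-\tfrac12 I+\tfrac{\sqrt3}{2}J$ and which is a real endomorphism of $\mathfrak{m}$ because $P$ is. Using $P^{2}=-P-I$ one computes $J^{2}=\tfrac13(4P^{2}+4P+I)=\tfrac13(-3I)=-I$, so $J$ is an almost complex structure on the vector space $\mathfrak{m}\cong T_{eH}(G/H)$.

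It remains to see that $J$ descends to a $G$-invariant almost complex structure on $G/H$, i.e. that it is $\operatorname{Ad}(H)$-invariant. For $h\in H\subset G^{\sigma}$ we have $\sigma(h)=h$, so differentiating the identity $\sigma\circ c_{h}=c_{\sigma(h)}\circ\sigma$ (with $c_{h}$ conjugation by $h$) shows that $\operatorname{Ad}(h)$ commutes with $\sigma|_{\mathfrak{g}}$, hence with $P$ on $\mathfrak{m}$, hence with every polynomial in $P$, in particular with $J$. Transporting $J$ by left translations then gives the required $G$-invariant tensor field, which is the canonical Wolf--Gray almost complex structure. I do not anticipate a serious obstacle here: the only points needing attention are the reality bookkeeping (that $\mathfrak{m}^{\mathbb{C}}$ is the sum of the two non-trivial eigenspaces and that the coefficients $-\tfrac12,\tfrac{\sqrt3}{2}$ keep $J$ real) and the identification $\mathfrak{m}=A_{\sigma}\mathfrak{g}$ carrying no $1$-eigenvector of $\sigma$; once these are in place, the quadratic relation $P^{2}+P+I=0$ does all the work.
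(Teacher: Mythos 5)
Your proof is correct. Note that the paper offers no proof of this statement at all --- it is quoted as a theorem of Wolf and Gray \cite{WG} --- so there is nothing to compare against; your argument (eigenspace decomposition of $\sigma$ on $\mathfrak{g}^{\mathbb{C}}$, the identity $P^{2}+P+I=0$ forced by $P^{3}=I$ with no eigenvalue $1$, the computation $J^{2}=-I$ for $J=\tfrac{1}{\sqrt{3}}(2P+I)$, and $\operatorname{Ad}(H)$-equivariance from $H\subset G^{\sigma}$) is precisely the standard Wolf--Gray argument and is complete.
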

Thus one can consider $G$-invariant almost Hermitian metrics on $G/H$ and investigate important properties of being Hermitian, semi-Kaehler, almost Kaehler and nearly Kaehler. All these classes are of importance to geometry and are used in physical applications. Let us just mention the relations of 3-symmetric spaces to twistor theory \cite{Bu}, Ricci solitons \cite{CR} and symplectic geometry \cite{BT2}. It seems that the relation of an important class of homogeneous nearly Kaehler manifolds to 3-symmetric spaces deserves a special attention \cite{C}.   
  
 \subsection{Kobayashi and Ono theorem} 
 Let $G/H$ be a  homogeneous space of reductive type. Let $G_{u}$ be a compact real form of a (connected) complexification $G^{\mathbb{C}}$ of $G$ and let $H_{u}$ be a compact real form of $H^{\mathbb{C}}\subset G^{\mathbb{C}}$. The space $G_{u}/H_{u}$ is called the  homogeneous space of compact type associated with $G/H$ (or dual to $G/H$). Groups $G_{u},$ $H_{u}$ are called the compact duals of $G$ and   $H,$ respectively.
\begin{theorem}[\cite{KO}, Theorem 4]\label{thm:ko} Assume that a discrete subgroup $\Gamma \subset G$ acts on $G/H$ freely and properly. Then there is a natural map
$$\eta: H^*(G_{u}/H_{u})\rightarrow H^*(\Gamma\setminus G/H)$$
which sends the Pontryagin classes of $G_{u}/H_{u}$ to those of $\Gamma\setminus G/H$. If $\Gamma\setminus G/H$ is compact then $\eta$ is injective. Also, it sends the Euler class of $G_u/H_u$ to the Euler class of $\Gamma\setminus G/H$.
\label{ko}
\end{theorem}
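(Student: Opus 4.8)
\emph{Proof proposal for Theorem~\ref{thm:ko}.} The plan is to realise $\eta$ as the map induced on cohomology by the inclusion of $G$-invariant forms into all forms on the quotient, to identify its source with the cohomology of the compact dual by Hirzebruch proportionality, and to deduce injectivity from Poincaré duality on $G_u/H_u$.

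First I would work with the reductive decomposition $\mathfrak{g}=\mathfrak{h}+\mathfrak{m}$ and the isomorphism $\Omega^*(G/H)^G\cong(\Lambda\mathfrak{m}^*)^H$ recalled in Section~\ref{sec:prelim}, its compact counterpart $\Omega^*(G_u/H_u)^{G_u}\cong(\Lambda\mathfrak{m}_u^*)^{H_u}$, and the equality $H^*(G_u/H_u)=H^*\big((\Lambda\mathfrak{m}_u^*)^{H_u}\big)$ valid because $G_u/H_u$ is compact. Passing to complexifications, $\mathfrak{g}^{\mathbb{C}}=\mathfrak{h}^{\mathbb{C}}+\mathfrak{m}^{\mathbb{C}}$ and $\mathfrak{g}_u^{\mathbb{C}}=\mathfrak{h}_u^{\mathbb{C}}+\mathfrak{m}_u^{\mathbb{C}}$ coincide, as Lie algebras and as decompositions, because the defining order-$3$ automorphisms of $\mathfrak{g}$ and of $\mathfrak{g}_u$ are restrictions of a single automorphism of their common complexification; hence the differential graded algebras $(\Lambda\mathfrak{m}^*)^H$ and $(\Lambda\mathfrak{m}_u^*)^{H_u}$ become isomorphic after $\otimes\,\mathbb{C}$, which yields a graded-algebra isomorphism $H^*(G_u/H_u)\cong H^*\big((\Lambda\mathfrak{m}^*)^H\big)$ (Hirzebruch's proportionality, cf.~\cite{KO}). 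Since $\Gamma$ acts freely and properly, $\Gamma\setminus G/H$ is a manifold and each $G$-invariant form on $G/H$, being $\Gamma$-invariant, descends to it; the inclusion of differential graded algebras $(\Lambda\mathfrak{m}^*)^H\hookrightarrow\Omega^*(\Gamma\setminus G/H)$ then induces the ring homomorphism $\eta$.

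Next I would check the statement on characteristic classes. Since $G/H$ is reductive, $T(G/H)\cong G\times_H\mathfrak{m}$ carries the canonical invariant connection; its Pontryagin forms, and its Euler form (using that $\mathfrak{m}$ is even-dimensional and oriented, which for a $3$-symmetric space follows from the invariant almost complex structure of Theorem~\ref{thm:wg}), computed by Chern--Weil lie in $(\Lambda\mathfrak{m}^*)^H$ and correspond under the proportionality isomorphism to the same invariant-polynomial expressions for $T(G_u/H_u)\cong G_u\times_{H_u}\mathfrak{m}_u$, the two isotropy representations being equal over $\mathbb{C}$. These forms pull back to the Pontryagin, resp. Euler, forms of $\Gamma\setminus G/H$, whose tangent bundle is the descent of $T(G/H)$; this is exactly the compatibility asserted for $\eta$.

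Finally, for injectivity when $\Gamma\setminus G/H$ is compact, I would argue as follows. A homogeneous space of reductive type carries a $G$-invariant volume form, so $(\Lambda^n\mathfrak{m}^*)^H\neq 0$ with $n=\dim(G/H)$, and the class of such a form generates $H^n(G_u/H_u)\cong\mathbb{R}$; under $\eta$ it is sent to the class of an invariant volume form on $\Gamma\setminus G/H$, whose integral equals $\operatorname{vol}(\Gamma\setminus G/H)>0$ by compactness, so $\eta$ is injective in top degree. For arbitrary $0\neq\alpha\in H^k(G_u/H_u)$, Poincaré duality on the closed oriented manifold $G_u/H_u$ provides $\beta\in H^{n-k}(G_u/H_u)$ with $\alpha\cup\beta\neq 0$, whence $\eta(\alpha)\cup\eta(\beta)=\eta(\alpha\cup\beta)\neq 0$ and so $\eta(\alpha)\neq 0$; thus $\eta$ is injective. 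The step I expect to be the real obstacle is making the proportionality isomorphism rigorous --- matching the Chevalley--Eilenberg differentials of the two invariant-form algebras through the unitary trick with the correct real structures and signs, and tracking the connectedness hypotheses on $H$ and $H_u$ needed to identify group invariants with Lie-algebra invariants --- after which the characteristic-class bookkeeping and the duality argument are formal.
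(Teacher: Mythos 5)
The paper does not prove Theorem~\ref{thm:ko} at all: it is imported verbatim from Kobayashi--Ono (\cite{KO}, Theorem~4), so there is no in-text proof to compare against. Your outline is, in substance, the original argument of \cite{KO}: identify $\Omega^*(G/H)^G\cong(\Lambda\mathfrak{m}^*)^H$ with $\Omega^*(G_u/H_u)^{G_u}\cong(\Lambda\mathfrak{m}_u^*)^{H_u}$ through the common complexification, let invariant forms descend to $\Gamma\setminus G/H$ to define $\eta$, match Chern--Weil representatives for the canonical connections on $G\times_H\mathfrak{m}$ and $G_u\times_{H_u}\mathfrak{m}_u$, and get injectivity from Poincar\'e duality once the top class is seen to map to a volume form with positive integral on the compact quotient. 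Two small cautions: the theorem is stated for arbitrary spaces of reductive type, so you should not justify $\mathfrak{m}^{\mathbb{C}}=\mathfrak{m}_u^{\mathbb{C}}$ via the order-$3$ automorphism --- take $\mathfrak{m}$ to be the Killing-orthogonal complement of $\mathfrak{h}$, which complexifies correctly in general; and the identification of $H$-invariants with $H_u$-invariants (through $H^{\mathbb{C}}$, with possible disconnectedness of $H$) is exactly the content of Lemma~3.7 and Proposition~3.9 of \cite{KO}, which you rightly flag as the technical heart rather than a formality. With those points supplied, the proposal is a correct reconstruction.
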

\subsection{Syndetic hull}\label{subsect:syndetic}
We will need the notion of a syndetic hull \cite{W}.
\begin{definition}
{\rm A {\it syndetic hull} of a subgroup $\Gamma$ of a Lie group $G$ is a subgroup $B$ of $G$ such that $B$ is connected, $B$ contains $\Gamma$ and $\Gamma\setminus B$ is compact.}
\end{definition}

\begin{lemma}[\cite{BT1}]\label{lemma:syndetic} If a solvable subgroup $\Gamma\subset G$ acts properly and co-compactly on $G/H$, then 
\begin{enumerate}
\item there exists a syndetic hull of $\Gamma$,
\item there exists a solvable subgroup $\Gamma_{0}\subset AN$ that acts properly and co-compactly on $G/H$, with a syndetic hull $B\subset AN$.
\end{enumerate}
\end{lemma}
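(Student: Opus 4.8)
The plan is to establish the two assertions in turn: (1) from the structure theory of discrete solvable subgroups of Lie groups together with Witte's syndetic-hull construction, and (2) from (1) by stripping off the compact part of the syndetic hull and then pushing what remains into $AN$ by means of Kobayashi's properness criterion.

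For (1), I would first note that, since $\Gamma\setminus G/H$ is a compact manifold on which $\Gamma$ acts properly discontinuously, $\Gamma$ is finitely generated: a compact set $C\subset G/H$ with $\Gamma C=G/H$ yields, via properness, a finite generating set. Since $G$ is linear, a discrete solvable subgroup of $G$ is (virtually) polycyclic by Mostow's theorem. A polycyclic subgroup of a connected Lie group admits a syndetic hull by Witte's construction \cite{W}; replacing $\Gamma$, if necessary, by a finite-index subgroup (which still acts properly and co-compactly, so nothing is lost) we obtain a connected solvable subgroup $B_0\subset G$ with $\Gamma\subset B_0$ and $\Gamma\setminus B_0$ compact.

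For (2), observe first that $B_0$ itself acts properly and co-compactly on $G/H$: co-compactness because $B_0\setminus G/H$ is a continuous image of the compact space $\Gamma\setminus G/H$, and properness because $B_0=\Gamma\,C$ with $C\subset B_0$ compact and the defining condition of a proper action unwinds using the properness of $\Gamma$. Now let $T$ be a maximal compact (toral) subgroup of $B_0$ and $S\subseteq B_0$ a complementary closed simply connected solvable subgroup, $B_0=TS$ (a fact from the structure theory of connected solvable Lie groups). As $T$ is compact, its presence affects neither properness nor co-compactness, so $S$ again acts properly and co-compactly on $G/H$, and $\dim S=\dim B_0-\dim T=d(B_0)=d(G)-d(H)$ by Theorem \ref{thm:kob-crit}. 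It remains to transport $S$ into the Iwasawa factor $AN$. For this I would use the Cartan-projection form of Kobayashi's criterion: properness of $S$ on $G/H$ is governed by the position of the Cartan projection of $S$ relative to that of $H$ in a closed Weyl chamber $\overline{\mathfrak a^+}$ of $\mathfrak a$; using the position of $\mathfrak a_H$ inside $\mathfrak a$ one then builds a connected (necessarily simply connected) solvable subgroup $B\subset AN$ of dimension $d(G)-d(H)$ whose Cartan projection lies in the same position relative to that of $H$. By Kobayashi's criterion $B$ acts properly, and since $d(B)=\dim B=d(G)-d(H)$, Theorem \ref{thm:kob-crit} shows that the action is co-compact. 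Finally, $B$ is a simply connected solvable Lie group and hence contains a co-compact lattice $\Gamma_0$, which is again solvable; thus $\Gamma_0\subset AN$ acts properly and co-compactly on $G/H$, with syndetic hull $B\subset AN$.

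The main obstacle is the last step of (2): $S$ need not be conjugate into $AN$ — it may genuinely wind around an elliptic torus — so one cannot merely conjugate, and the replacement by a subgroup of $AN$ must be carried out so as to preserve simultaneously properness of the action and the dimension count that forces co-compactness. This is exactly where the Cartan-projection criterion of Kobayashi (and its refinements), together with the structure of amenable connected subgroups of a semisimple Lie group, does the real work; the remaining steps are formal.
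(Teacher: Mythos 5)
Note first that the paper does not prove this lemma at all: it is imported wholesale from \cite{BT1}, so the only question is whether your argument would actually establish it. Part (1) of your proposal is fine and is the standard route (finite generation from the cocompact proper action, Mostow's theorem that discrete solvable subgroups of connected Lie groups are polycyclic, then Witte's syndetic hull theorem after passing to a finite-index subgroup, which is harmless here). The reduction steps at the start of (2) --- that the hull $B_0$ itself acts properly and co-compactly, and that one may discard a maximal compact torus $T$ --- are also correct.

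The genuine gap is exactly the step you flag as ``the main obstacle'' and then do not carry out: you write that ``one then builds a connected solvable subgroup $B\subset AN$ of dimension $d(G)-d(H)$ whose Cartan projection lies in the same position relative to that of $H$,'' but knowing the position of $\mathfrak{a}_H$ in $\mathfrak{a}$ gives no recipe for such a construction, and this construction \emph{is} the content of the lemma. The argument that actually works (and is the one in \cite{BT1}) is structural rather than positional: pass to the algebraic hull (Zariski closure) of $S$, which is a solvable real algebraic group and hence an almost direct product of a compact torus $T_c$ and a split solvable part $S_{\mathrm{spl}}$; by the real Borel fixed point theorem $S_{\mathrm{spl}}$ is conjugate into a minimal parabolic $MAN$, and being connected and split it has trivial image in the compact group $M$, hence lands in $AN$; since $T_c$ is compact, the Cartan projections of $S$ and $S_{\mathrm{spl}}$ are at bounded Hausdorff distance, so Kobayashi's properness criterion transfers properness from $S$ to $S_{\mathrm{spl}}$; finally $d(S_{\mathrm{spl}})=\dim S_{\mathrm{spl}}=d(G)-d(H)$ forces co-compactness. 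On this last point there is a second, smaller gap: Theorem \ref{thm:kob-crit} as stated gives only the \emph{necessary} condition $d(G)=d(H)+d(U)$ for a proper co-compact action; the implication you invoke (properness together with $d(B)=d(G)-d(H)$ implies co-compactness) is the converse, which is also due to Kobayashi but is not what the quoted theorem says, so it must be cited separately.
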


\section{TNCZ-fibrations and cohomology of homogeneous spaces}
In this section we discuss our basic topological tool \cite{GHV}, \cite{FHT},\cite{FOT}. One can find all the necessary information about the role of the totally non cohomologous to zero fibrations, differential graded algebras, and so on, in these sources. We use this material without further explanations.
\subsection{Totally non cohomologous to zero fibrations}
Let 
$$
\CD
F @>{i}>> E @>{\pi}>> B
\endCD
$$
be a Serre fibration. We say that it is {\it totally non-cohomologous to zero} (TNCZ for short) if one of the following equivalent conditions is satisfied:
\begin{itemize}
\item $i^*: H^*(E)\rightarrow H^*(F)$ is onto,
\item $\pi^*: H^*(B)\rightarrow H^*(E)$ in injective,
\end{itemize}
It appears that the TNCZ condition yields and obstruction to compact Clifford-Klein forms. This was discovered   by Morita \cite{M}. We recall his result  in a slightly different form. 
\begin{theorem}\label{thm:tncz} Assume that a homogeneous space $G/H$ of reductive type admits a compact Clifford-Klein form. Then the fiber bundle
$$H_U/K_H\rightarrow G_u/K_H\rightarrow G_u/H_u$$
must be a TNCZ-fibration.
\end{theorem}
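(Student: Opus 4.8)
The plan is to derive the TNCZ property from the Kobayashi-Ono map of Theorem~\ref{thm:ko}. Write $p : G_u/K_H\to G_u/H_u$ for the projection of the fibration in question; the TNCZ condition for it is precisely the injectivity of $p^*: H^*(G_u/H_u)\to H^*(G_u/K_H)$, and since TNCZ is a condition on Betti numbers (hence independent of the field of coefficients) it suffices to prove this injectivity with complex coefficients. As a preliminary reduction, a discrete subgroup of $G$ acting properly and co-compactly on $G/H$ is finitely generated (it acts co-compactly on the connected space $G/H$) and linear, hence virtually torsion-free by Selberg's lemma; replacing $\Gamma$ by a finite-index subgroup merely passes to a finite cover of $\Gamma\setminus G/H$, which is again a compact Clifford-Klein form, and does not affect the conclusion, which does not involve $\Gamma$. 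So we may assume that $\Gamma$ acts freely, that $X:=\Gamma\setminus G/H$ is a closed manifold, and --- by Theorem~\ref{thm:ko} --- that there is an injection $\eta : H^*(G_u/H_u)\to H^*(X)$.

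The heart of the argument is a commutative square relating $\eta$ to $p^*$. Let $q : \Gamma\setminus G/K_H\to X$ be the fibre bundle with fibre $H/K_H$; since $H/K_H$ is contractible, $q$ is a homotopy equivalence and $q^*$ is an isomorphism. Now observe: (a) $G_u/H_u$ and $G_u/K_H$ are compact homogeneous spaces, so by the formula recalled in Section~\ref{sec:prelim} their real cohomology equals the cohomology of the algebras of invariant forms $\Omega^*(G_u/H_u)^{G_u}$ and $\Omega^*(G_u/K_H)^{G_u}$; (b) after complexification these algebras become, by Weyl's unitary trick, canonically isomorphic to $\Omega^*(G/H)^G\otimes\mathbb C$ and $\Omega^*(G/K_H)^G\otimes\mathbb C$ --- equivalently, to the relative Chevalley-Eilenberg complexes $C^*(\mathfrak g^{\mathbb C},\mathfrak h^{\mathbb C})$ and $C^*(\mathfrak g^{\mathbb C},\mathfrak k_H^{\mathbb C})$ --- and under these identifications $p^*$ is pullback of invariant forms along $G/K_H\to G/H$, i.e.\ the tautological inclusion induced by $\mathfrak k_H\subset\mathfrak h$; (c) a $G$-invariant form on $G/H$ (resp.\ on $G/K_H$) is in particular $\Gamma$-invariant, hence descends to $X$ (resp.\ to $\Gamma\setminus G/K_H$), and --- unwinding the construction of $\eta$ in \cite{KO} --- $\eta$ is exactly this descent homomorphism composed with the identifications of (a)--(b); the same recipe applied to $G/K_H$ produces a map $\tilde\eta : H^*(G_u/K_H;\mathbb C)\to H^*(\Gamma\setminus G/K_H;\mathbb C)$. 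Chasing a closed invariant representative $\omega$ of a class of $H^*(G_u/H_u;\mathbb C)$ --- its pullback to $G/K_H$ descends to the $q$-pullback of the descent of $\omega$ --- yields $\tilde\eta\circ p^*=q^*\circ\eta$, that is, the square
$$
\begin{CD}
H^*(G_u/H_u;\mathbb C) @>{p^*}>> H^*(G_u/K_H;\mathbb C)\\
@V{\eta}VV @VV{\tilde\eta}V\\
H^*(X;\mathbb C) @>{q^*}>{\cong}> H^*(\Gamma\setminus G/K_H;\mathbb C)
\end{CD}
$$
commutes. Since $q^*$ is an isomorphism and $\eta$ is injective, $\tilde\eta\circ p^*=q^*\circ\eta$ is injective, hence $p^*$ is injective, and therefore the fibration $H_u/K_H\to G_u/K_H\to G_u/H_u$ is TNCZ.

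I expect the main obstacle to be step (c): making rigorous the identification of the Kobayashi-Ono homomorphism with the ``restrict an invariant form on $G/H$ to a $\Gamma$-invariant form and descend'' map under the unitary-trick isomorphisms of (a)--(b), and checking that this description is natural enough --- both with respect to the inclusion $K_H\subset H_u$ on the compact side and with respect to the projections to the $\Gamma$-quotients --- for the square above to commute on the nose; some care with possibly disconnected isotropy groups and with the complexifications is also needed here. The remaining ingredients --- the reduction to torsion-free $\Gamma$, the contractibility of $H/K_H$, and the computation of the cohomology of the compact duals by invariant forms --- are routine given the results quoted above.
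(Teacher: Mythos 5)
Your proof is correct and follows essentially the same route as the paper: both identify the Kobayashi--Ono map $\eta$ with descent of invariant forms, build the commutative square comparing $\eta$ with its analogue for $K_H$ in place of $H$, and use the injectivity of $\eta$ together with the homotopy equivalence $\Gamma\setminus G/K_H\simeq\Gamma\setminus G/H$ (contractibility of $H/K_H$) to deduce the injectivity of $p^*$. The only differences are cosmetic (your explicit Selberg-lemma reduction to a free action and the passage to complex coefficients), so no further comparison is needed.
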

\begin{proof} Recall the isomorphisms from \cite{KO}:
$$\Omega^*(G/H)^G\cong (\Lambda\mathfrak{q}^*)^H$$
$$\Omega^*(G_u/H_u)^{G_u}\cong (\Lambda\mathfrak{q}_u^*)^{H_u}$$
One can see from the proof of these isomorphisms, that the same holds for the decompositions
$$\mathfrak{g}=\mathfrak{k}_h+\tilde{\mathfrak{q}},\,\mathfrak{g}_u=\mathfrak{k}_h+\tilde{\mathfrak{q}}\cap\mathfrak{k}+
\sqrt{-1}\tilde{\mathfrak{q}}\cap\mathfrak{p}$$
and, moreover, the inclusions hold:
$$\mathfrak{k}_h\subset\mathfrak{k},\mathfrak{q}\subset\tilde{\mathfrak{q}},\,\mathfrak{q}_u\subset\tilde{\mathfrak{q}}_u.$$
It follows that the following diagram is commutative
$$
\CD
(\Lambda\tilde{\mathfrak{q}}^*)^{K_H}) @>{\cong}>>(\Lambda\tilde{\mathfrak{q}}_u^*)^{K_H}\\
@A{incl}AA @A{incl}AA\\
(\Lambda\mathfrak{q}^*)^H @>>> (\Lambda\mathfrak{q}_u^*)^{H_u}
\endCD
$$
As a result we get the commutative diagram
$$
\CD
\Omega^*(G_u/K_H)^{G_u} @>{\hat{\varphi}}>>\Omega^*(G/K_H)^G @>{incl}>>\Omega^*(\Gamma\setminus G/H)\\
@A{\pi_1}AA @A{\pi_2}AA\\
\Omega^*(G_u/H_u)^{G_u} @>{\varphi}>>\Omega^*(G/H)^G @>{incl}>>\Omega^*(\Gamma\setminus G/H)
\endCD
$$
Here $\pi_1$ and $\pi_2$ denote the pullbacks. The latter yields the commutative diagram
$$
\CD
H^*(G_u/K_H) @>{\hat{\gamma}}>>H^*(\Gamma\setminus G/K_H)\\
@AAA @AAA\\
H^*(G_u/H_u) @>{\gamma}>> H^*(\Gamma\setminus G/H)
\endCD
$$
Now, assuming that $\Gamma\setminus G/H$ is compact we conclude that $\gamma$ is injective \cite{KO}. But since $H^*(\Gamma\setminus G/H)\cong H^*(\Gamma\setminus G/K_H)$, we see from the commutativity that the left vertical arrow must be injective as well. The proof is complete.
\end{proof}
 
\subsection{Cohomology of compact homogeneous spaces} 

In this section we recall the classical results on real cohomology of compact homogeneous spaces in the suitable for us form \cite{T}. Let $G$ be a compact connected Lie group. The cohomology algebra $H^*(G)$ is an exterior algebra over vector space generated by primitive elements $y_1,...,y_n$ of odd degrees $2k_1-1,...,2k_n-1$, where $n={rank}\,G$. These generators can be chosen in a way, that the cohomology algebra of the classifying space $BG$ is isomorphic to the polynomial algebra $\mathbb{R}[x_1,...,x_n]$, where elements $y_j$ are cohomology classes in the universal space $EG$ containing coboundaries of the transgression for $x_j$. Let $H\subset G$ be a closed subgroup. Choose maximal tori $T_H\subset T$ of $H$ and $G$, and their Lie algebras $\mathfrak{t}_H\subset\mathfrak{t}$. It is well known that the cohomology algebras $H^*(BG)$ and $H^*(BH)$ are isomorphic to the polynomial algebras of the invariants of the Weyl groups $W(G)$ and $W(H)$ respectively. It is a classical result that the canonical homomorphism $\rho^*(H,G): H^*(BG)\rightarrow H^*(BH)$ induced by the inclusion $H\hookrightarrow G$ sends any invariant polynomial from $H^*(BG)$ its restriction on $H^*(BH)$. 
Introduce the following notation:
$$H^*(BG)\cong\mathbb{R}[\mathfrak{t}]^{W(G)}=\mathbb{R}[f_1,...,f_n],$$
$$H^*(BH)\cong \mathbb{R}[\mathfrak{t}_H]^{W(H)}=\mathbb{R}[u_1,...,u_r],$$
where $f_j$ are $W(G)$-invariant polynomials over $\mathfrak{t}$, and $u_k$ are $W(H)$-invariant polynomials over $\mathfrak{t}_H$.
The cohomology algebra of $G/H$ can be calculated with the use of the following.
\begin{theorem}[\cite{T}, Theorem 8]\label{thm:g/h} Assume that $f_1,...,f_n$ are generators of the ring $\mathbb{R}[\mathfrak{t}]^{W(G)}$, and assume that $r=\operatorname{rank}\,H$.  Let $j^*:\mathbb{R}[\mathfrak{t}]^{W(G)}\rightarrow \mathbb{R}[\mathfrak{t}_H]^{W(H)}$ denote the restriction map $j^*(f)=f|_{\mathfrak{t}_H}$ (which corresponds to $\rho^*(G,H)$ under the suitable isomorphisms). If $j^*(f_{r+1}),...,j^*(f_n)$ belong to the ideal $(j^*(f_1),...,j^*(f_r))$ generated by polynomials $j^*(f_1),...,j^*(f_r)$, then
$$H^*(G/H)=(\mathbb{R}[\mathfrak{t}_H]^{W(H)}/(j^*(f_1),...,j^*(f_r))\otimes\Lambda (x_{r+1},...,x_n).$$
\end{theorem}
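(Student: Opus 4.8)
The plan is to identify $G/H$ with the fibre of the classifying-space map of the inclusion $i\colon H\hookrightarrow G$, feed the resulting fibration into the Eilenberg--Moore spectral sequence, and then use the hypothesis to split off an exterior factor from a Koszul complex. Since $G$ is compact and connected and $H$ is closed, the Borel construction of the homogeneous $G$-space $G/H$ is $EG\times_G(G/H)\simeq EG/H\simeq BH$, so there is a fibration
\[
G/H\longrightarrow BH\xrightarrow{Bi}BG ,
\]
and the induced map $(Bi)^{*}\colon H^{*}(BG)\to H^{*}(BH)$ is exactly the restriction homomorphism $j^{*}\colon\mathbb{R}[\mathfrak{t}]^{W(G)}\to\mathbb{R}[\mathfrak{t}_H]^{W(H)}$ of the statement. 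Because $BG$ is simply connected, the Eilenberg--Moore spectral sequence of this fibration converges multiplicatively to $H^{*}(G/H)$ with
\[
E_2\;=\;\operatorname{Tor}_{H^{*}(BG)}\bigl(\mathbb{R},\,H^{*}(BH)\bigr).
\]

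First I would compute this $\operatorname{Tor}$-algebra. Since $H^{*}(BG)\cong\mathbb{R}[f_1,\dots,f_n]$ is a free (graded-commutative) polynomial algebra, the Koszul complex $\bigl(\Lambda(e_1,\dots,e_n)\otimes H^{*}(BG),\,d_0\bigr)$ with $d_0e_i=f_i$ and $\deg e_i=\deg f_i-1=2k_i-1$ is a free resolution of $\mathbb{R}$; tensoring it over $H^{*}(BG)$ with $H^{*}(BH)$ exhibits $E_2$ as the cohomology of the Koszul differential graded algebra $K=\bigl(\Lambda(e_1,\dots,e_n)\otimes H^{*}(BH),\,d\bigr)$, $de_i=j^{*}(f_i)$. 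The crucial point is that $j^{*}(f_1),\dots,j^{*}(f_r)$ is a \emph{regular sequence} in $H^{*}(BH)=\mathbb{R}[u_1,\dots,u_r]$. To see this, note that $G/H$ is a compact manifold, so $H^{*}(G/H)$ is finite-dimensional, whence the $E_2$-page $H^{*}(BG)\otimes H^{*}(G/H)$ of the Serre spectral sequence of $G/H\to BH\to BG$ is finitely generated over $H^{*}(BG)$; therefore $H^{*}(BH)$ is a finitely generated $H^{*}(BG)$-module. By graded Nakayama, $\mathbb{R}\otimes_{H^{*}(BG)}H^{*}(BH)=H^{*}(BH)\big/\bigl(j^{*}(f_1),\dots,j^{*}(f_n)\bigr)$ is finite-dimensional; but by hypothesis the ideal $\bigl(j^{*}(f_1),\dots,j^{*}(f_n)\bigr)$ coincides with $\bigl(j^{*}(f_1),\dots,j^{*}(f_r)\bigr)$, so the latter is irrelevant up to radical. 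Being generated by $r=\operatorname{rank}H=\dim H^{*}(BH)$ homogeneous elements, it is a homogeneous system of parameters, and since a polynomial algebra is Cohen--Macaulay, a homogeneous system of parameters is automatically a regular sequence.

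Next I would split $K=\bigl(\Lambda(e_1,\dots,e_r)\otimes H^{*}(BH)\bigr)\otimes\Lambda(e_{r+1},\dots,e_n)$. By the regular sequence property the cohomology of the first tensor factor is $H^{*}(BH)\big/\bigl(j^{*}(f_1),\dots,j^{*}(f_r)\bigr)$, concentrated in resolution degree $0$. For $k>r$ the cochain $de_k=j^{*}(f_k)$ lies in $\bigl(j^{*}(f_1),\dots,j^{*}(f_r)\bigr)$, so subtracting from $e_k$ a suitable element of $\Lambda(e_1,\dots,e_r)\otimes H^{*}(BH)$ of the same total degree turns it into a cocycle $x_k$ of degree $2k_k-1$, and these classes generate a free exterior subalgebra of $H^{*}(K)$. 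Hence
\[
E_2\;\cong\;\Bigl(\mathbb{R}[\mathfrak{t}_H]^{W(H)}\big/\bigl(j^{*}(f_1),\dots,j^{*}(f_r)\bigr)\Bigr)\otimes\Lambda(x_{r+1},\dots,x_n).
\]
Finally I would establish collapse: here the polynomial-quotient part sits in resolution degree $0$ and the exterior generators in resolution degree $-1$, so any higher differential would have to raise the resolution degree into the (empty) positive range and therefore vanishes on generators, hence on all of $E_2$; and since $E_2$ is free as a module over its exterior subalgebra, there are no multiplicative extension problems. This yields the isomorphism of the theorem.

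The step I expect to be the main obstacle is precisely the regular-sequence claim together with the absence of higher differentials and hidden extensions: one must genuinely use compactness of $G/H$ (to know that $H^{*}(BH)$ is module-finite over $H^{*}(BG)$, and hence that $j^{*}(f_1),\dots,j^{*}(f_r)$ really forms a system of parameters rather than merely being ``enough to absorb'' the remaining $j^{*}(f_k)$), and one must carry out the bidegree bookkeeping in the Eilenberg--Moore spectral sequence. The remaining ingredients---Koszul resolutions and elementary invariant theory---are routine.
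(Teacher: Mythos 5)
Your argument is correct, and its algebraic core is exactly the one that underlies the cited result: the complex $\Lambda(e_1,\dots,e_n)\otimes H^{*}(BH)$ with $de_i=j^{*}(f_i)$ that you extract as the $E_2$-page of the Eilenberg--Moore spectral sequence is literally the Cartan algebra $(C_{G/H},d)$ which the paper introduces immediately after the theorem, and Terzic's proof (the paper itself gives none, only the citation) consists of computing $H^{*}(C_{G/H})$ by the same two steps you perform: (i) the regular-sequence/Koszul argument killing $e_1,\dots,e_r$ and leaving the quotient ring $\mathbb{R}[\mathfrak{t}_H]^{W(H)}/(j^{*}(f_1),\dots,j^{*}(f_r))$, and (ii) the change of variables $e_k\mapsto x_k=e_k-\sum h_ie_i$ for $k>r$ splitting off the exterior factor. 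You correctly identify, and correctly prove, the one nontrivial input, namely that module-finiteness of $H^{*}(BH)$ over $H^{*}(BG)$ (from compactness of $G/H$) makes $j^{*}(f_1),\dots,j^{*}(f_r)$ a homogeneous system of parameters and hence, by Cohen--Macaulayness of $\mathbb{R}[\mathfrak{t}_H]^{W(H)}$, a regular sequence. The only genuine difference is the topological wrapper: you pass through the fibration $G/H\to BH\to BG$ and the Eilenberg--Moore spectral sequence, which forces you to argue collapse and rule out multiplicative extensions, whereas the route via Cartan's theorem $H^{*}(G/H)\cong H^{*}(C_{G/H})$ (valid over $\mathbb{R}$ for compact connected $G$ and closed connected $H$) gives the answer as an algebra isomorphism on the nose and makes steps (i)--(ii) the whole proof. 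Your collapse argument (differentials vanish on the multiplicative generators sitting in filtrations $0$ and $-1$) is fine; the extension argument is the one place you gloss --- ``free over the exterior subalgebra'' is not by itself a reason --- but in characteristic zero it is easily repaired: the odd generators square to zero by graded commutativity, and the filtration-$0$ part is realized as the actual image of $H^{*}(BH)\to H^{*}(G/H)$. So the proposal is sound; the Eilenberg--Moore detour buys nothing here over the Cartan model, but it costs only these two routine verifications.
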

As usual, the differential graded algebra
$$(C_{G/H},d)=(H^*(BH)\otimes \Lambda (y_1,...,y_n),d)$$
$$d|_{H^*(BH)}=0,\,dy_j=j^*(f_j),\,j=1,...,n$$
is called the {\it Cartan algebra} of $G/H$. Note that $H^*(G/H)\cong H^*(C_{G/H})$ for any homogeneous space $G/H$ of compact Lie group $G$.

\section{Proof of the main theorem}
The proof of the main theorem follows from the statements below.
\begin{proposition}\label{prop:d} Assume that $d=\dim\,H_u/K_H$. If the coefficient in degree $d$ of the  Poincar\'e polynomial $P(G_u/K_H,t)$ vanishes, then $G/H$ does not admit compact Clifford-Klein forms.
\end{proposition}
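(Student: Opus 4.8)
The plan is to use Theorem~\ref{thm:tncz} together with the elementary fact that a TNCZ fibration forces the total-space Poincar\'e polynomial to be the product of those of the base and the fiber. Concretely, consider the fibration
$$
\CD
H_u/K_H @>>> G_u/K_H @>>> G_u/H_u.
\endCD
$$
If $G/H$ admits a compact Clifford-Klein form, then by Theorem~\ref{thm:tncz} this fibration is TNCZ, i.e. $i^*\colon H^*(G_u/K_H)\to H^*(H_u/K_H)$ is surjective. Since we work over a field (the reals), a standard consequence of TNCZ (via the Leray--Hirsch theorem, or the collapse of the Serre spectral sequence at $E_2$ with untwisted coefficients, which holds here because $G_u$ is connected and $G_u/K_H\to G_u/H_u$ has the homotopy-theoretic structure of a principal-type bundle with connected structure group $H_u$) is that $H^*(G_u/K_H)$ is a free $H^*(G_u/H_u)$-module on a basis mapping to a basis of $H^*(H_u/K_H)$. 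On Poincar\'e polynomials this gives
$$
P(G_u/K_H,t)=P(G_u/H_u,t)\cdot P(H_u/K_H,t).
$$

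Now I would extract the degree-$d$ coefficient, where $d=\dim H_u/K_H$. The fiber $H_u/K_H$ is the compact dual construction applied to $H$, hence is itself a compact homogeneous space of reductive type; its top nonzero cohomology is in degree $d$ (indeed $H_u/K_H$ is, up to a finite cover issue that does not affect real cohomology, a product of a compact symmetric space of noncompact-type dual and a torus, so it satisfies Poincar\'e duality and its Poincar\'e polynomial has a nonzero leading coefficient in degree exactly $d=\dim\mathfrak p_H=d(H)$). Therefore the coefficient of $t^d$ in $P(H_u/K_H,t)$ is a positive integer, say $b_d(H_u/K_H)\ge 1$. Since all Poincar\'e polynomials have nonnegative coefficients and constant term $1$, the coefficient of $t^d$ on the left-hand side is at least $b_d(H_u/K_H)\cdot 1\ge 1$ (taking the contribution of the degree-$0$ part of $P(G_u/H_u,t)$). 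Hence if the coefficient of $t^d$ in $P(G_u/K_H,t)$ vanishes, the product formula is violated, so the fibration cannot be TNCZ, and therefore by Theorem~\ref{thm:tncz} the space $G/H$ admits no compact Clifford-Klein form. This is exactly the contrapositive of the asserted statement.

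The main obstacle I anticipate is justifying the product formula on Poincar\'e polynomials, i.e. that TNCZ here really does yield $H^*(G_u/K_H)\cong H^*(G_u/H_u)\otimes H^*(H_u/K_H)$ as graded vector spaces. For a general TNCZ \emph{Serre} fibration this requires the system of local coefficients $H^*(F)$ on the base to be trivial, which one should check — but in our situation $G_u/K_H\to G_u/H_u$ is the bundle associated to the principal $H_u$-bundle $G_u\to G_u/H_u$ with fiber the homogeneous space $H_u/K_H$, and $\pi_1(G_u/H_u)$ acts through $\pi_0(H_u)=1$ (or one passes to the universal cover / uses that the relevant components are connected), so the coefficients are untwisted and the Leray--Hirsch theorem applies verbatim. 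A secondary, easier point is the claim $b_d(H_u/K_H)\ge 1$: this follows from Poincar\'e duality for the closed orientable manifold $H_u/K_H$ of dimension $d$ (orientability of a compact homogeneous space $H_u/K_H$ with $K_H$ a maximal compact subgroup is standard, e.g. because it carries an invariant volume form), together with $H^0=\mathbb R$. Once these two points are in place, the degree counting above is immediate.
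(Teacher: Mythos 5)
Your proof is correct and follows essentially the same route as the paper: both arguments feed the compact Clifford--Klein form into Theorem~\ref{thm:tncz} and then use the nonvanishing of the top Betti number of the fiber $H_u/K_H$ in degree $d$ to force $H^d(G_u/K_H)\neq 0$, contradicting the hypothesis on the Poincar\'e polynomial. The only difference is packaging: the paper extracts the contradiction by pushing forward the fundamental class $[H_u/K_H]$ into $H_d(G_u/K_H)=0$, whereas you invoke the full Leray--Hirsch product formula $P(G_u/K_H,t)=P(G_u/H_u,t)\cdot P(H_u/K_H,t)$ and compare coefficients of $t^d$ --- slightly more machinery than needed, but sound.
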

\begin{proof} Recall that if $G/H$ admitted a compact Clifford-Klein form, one would have $i_*[H_u/K_H]\not=0$, by Theorem \ref{thm:tncz} and since $d=\dim (H_u/K_H)$.  On the other hand, one can see that $i=p\circ j$, where $j$ and $p$ are determined by the sequence
$$
\CD
H_u/K_H @>{j}>> G_u/K_H @>{p}>> G_u/K
\endCD
$$
By assumption on the Poincar\'e polynomial of $G_u/K_H$, $H_d(G_u/K_H)=0$.
Hence, $i_*[H_u/K_H]=p_*(j_*[H_u/K_H])=0$, because $j_*[H_u/K_H]\in H_d(G_u/K_H)=0$.  
\end{proof}

\begin{proposition}\label{prop:equal-rank} Let $G/H$ be a non-compact homogeneous space of reductive type. Assume that $\operatorname{rank}\,G=\operatorname{rank}\,H$. Then no solvable $\Gamma$ can yield a compact Clifford-Klein form of $G/H$.
\end{proposition}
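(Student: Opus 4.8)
The plan is to argue by contradiction, exploiting the fact that a solvable form, via its syndetic hull, fibres over a closed manifold with positive-dimensional compact solvmanifold fibres, so that its Euler characteristic vanishes; this will clash with the Euler characteristic being forced to be nonzero by the equal-rank hypothesis together with the Kobayashi--Ono map of Theorem~\ref{thm:ko}.

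First, suppose a solvable $\Gamma$ yields a compact Clifford--Klein form of $G/H$. By Lemma~\ref{lemma:syndetic}(2) I may pass to a subgroup $\Gamma_0\subset AN$ acting properly and co-compactly on $G/H$, with a syndetic hull $B\subset AN$. Since $AN$ is simply connected solvable it has no nontrivial compact subgroup and no torsion, so $\Gamma_0$ is torsion free and therefore acts freely, while $B$ is a closed, connected, simply connected solvable subgroup, hence diffeomorphic to $\mathbb{R}^m$ with $m=\dim B$; a routine transfer of properness across the compact quotient $\Gamma_0\setminus B$ shows that $B$ itself acts freely, properly and co-compactly on $G/H$. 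Applying Theorem~\ref{thm:kob-crit} to $B$ (with $d(B)=\dim B$) gives $\dim B=d(G)-d(H)=\dim\mathfrak{p}-\dim\mathfrak{p}_H>0$, the positivity because $G/H$ is non-compact; in particular $\Gamma_0$ is infinite.

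Next, since $B\cong\mathbb{R}^m$ acts freely, properly and co-compactly, $G/H\to B\setminus G/H$ is a principal $B$-bundle over a closed manifold, and, as $\Gamma_0\subset B$, the induced map $\Gamma_0\setminus G/H\to B\setminus G/H$ is a fibre bundle whose fibre is the compact solvmanifold $\Gamma_0\setminus B$. Because $m\ge 1$ this fibre is a positive-dimensional compact solvmanifold, hence $\chi(\Gamma_0\setminus B)=0$, and multiplicativity of the Euler characteristic in fibre bundles gives $\chi(\Gamma_0\setminus G/H)=\chi(\Gamma_0\setminus B)\,\chi(B\setminus G/H)=0$. On the other hand $\operatorname{rank}G=\operatorname{rank}H$ forces $\operatorname{rank}G_u=\operatorname{rank}H_u$ (rank is preserved by complexification and by passing to compact real forms), so by the Hopf--Samelson theorem $\chi(G_u/H_u)>0$ and $N:=\dim G_u/H_u$ is even; $G_u/H_u$ is orientable (the isotropy representation of a connected group preserves orientation --- pass to a finite cover if $H$ is disconnected), so the Euler class $e(G_u/H_u)\in H^{N}(G_u/H_u)\cong\mathbb{R}$ is nonzero. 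Since $\dim G_u/H_u=\dim G-\dim H=\dim(G/H)=\dim(\Gamma_0\setminus G/H)=N$, Theorem~\ref{thm:ko} applies to the free co-compact action of $\Gamma_0$ and yields an injection $\eta\colon H^{*}(G_u/H_u)\to H^{*}(\Gamma_0\setminus G/H)$ sending $e(G_u/H_u)$ to $e(\Gamma_0\setminus G/H)$; thus $e(\Gamma_0\setminus G/H)\neq0$ in $H^{N}(\Gamma_0\setminus G/H)$, which forces $\Gamma_0\setminus G/H$ to be orientable with $\chi(\Gamma_0\setminus G/H)\neq0$ --- contradicting the vanishing just established.

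The conceptual core is the dichotomy above; the step I expect to demand the most care is the bookkeeping at the end, namely checking that the ``Euler class'' of Theorem~\ref{thm:ko} is that of the tangent bundle, that it lies in the top degree, and --- crucially --- that the degrees match ($\dim G_u/H_u=\dim G/H$ with $N$ even), these last points being precisely what the equal-rank hypothesis supplies along with $\chi(G_u/H_u)\neq0$. The auxiliary facts (transfer of properness to $B$ and its simple connectedness, torsion-freeness of $\Gamma_0$, vanishing of $\chi$ for positive-dimensional compact solvmanifolds, orientability of $G_u/H_u$) are standard and I would dispatch them quickly.
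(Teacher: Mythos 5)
Your proof is correct and follows essentially the same strategy as the paper: the equal-rank hypothesis gives $\chi(G_u/H_u)\neq 0$, hence a nonzero Euler class that Theorem~\ref{thm:ko} transfers to $\Gamma\setminus G/H$, while solvability and the syndetic hull force $\chi(\Gamma\setminus G/H)=0$. The only (immaterial) difference is in the bookkeeping for the vanishing: you use the fibration of $\Gamma_0\setminus G/H$ over $B\setminus G/H$ with positive-dimensional compact solvmanifold fibre $\Gamma_0\setminus B$, whereas the paper fibres $\Gamma\setminus G/K_H$ over $\Gamma\setminus G/K$ with fibre $K/K_H$ and separately shows $\chi(\Gamma)=0$ because $\Gamma$ is virtually a lattice in a simply connected solvable Lie group.
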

\begin{proof} Without loss of generality we assume that $\Gamma$ acts properly and freely on $G/H$. Note that $G/K_H$ and $G/H$ have the same homotopy type, and the same holds for $\Gamma\setminus G/K_H$ and $\Gamma\setminus G/H$. Let $\chi(\Gamma)=\chi(B\Gamma)=\chi(\Gamma\setminus G/K)$. Looking at the fibration
$$K/K_H\rightarrow \Gamma\setminus G/K_H\rightarrow \Gamma\setminus G/K$$
one obtains 
$$\chi(\Gamma\setminus G/H)=\chi(\Gamma\setminus G/K_H)=\chi(K/K_H)\cdot\chi(\Gamma).$$
By assumption of the Proposition, $\operatorname{rank}\,G = \operatorname{rank}\,H$ which implies $\operatorname{rank}\,G_u=\operatorname{rank}\,H_u$ and, therefore, $\chi(G_u/H_u)\not=0$. Thus, the Euler class of $T(G_u/H_u)$ does not vanish, and by Theorem \ref{thm:ko}, $e(T(\Gamma\setminus G/H)\not=0$, which implies that the Euler characteristic of  $\Gamma\setminus G/H$ does not vanish. On the other hand, since $\Gamma$ admits a syndetic hull $B$ by Lemma \ref{lemma:syndetic}, one obtains a compact solvmanifold $\Gamma\setminus B$.  Since $\Gamma$ contains a subgroup of finite index which can be embedded into a simply-connected solvable Lie group $\tilde B$ as a lattice (see \cite{VGS}, Theorem 3.14), one obtains $\chi(\Gamma)=\chi(\Gamma\setminus\tilde B)$. It is known that the Euler characteristic of a compact solvmanifold vanishes \cite{HIL}. We have arrived at a contradiction.
\end{proof}
Note again that everywhere $B$ denotes the syndetic hull of $\Gamma$ which exists by Lemma \ref{lemma:syndetic}. Consider the relative Lie algebra cohomology $H^*(\mathfrak{g},\mathfrak{a},\mathbb{R})$ (for any subalgebra $\mathfrak{a}\subset\mathfrak{g}$). 

Denote by $\Gamma_H$ a co-compact lattice in $H$. It is straightforward that $M_B=\Gamma_H\setminus G/B$ is a compact manifold of dimension, say, $n$. Since $B$ is a syndetic hull of $\Gamma$, it is unimodular, which implies $(\Lambda^n(\mathfrak{g}/\mathfrak{b}^*)^B\not=0.$ It follows that there exists an invariant $n$-form $\Phi$ on $G/B$ which yields a volume form on $M_B$. It follows that $H^n(\mathfrak{g},\mathfrak{b},\mathbb{R})\not=0$. Let $A\subset B$ be a closed connected subgroup and $\mathfrak{a}\subset\mathfrak{b}$ the corresponding Lie subalgebra. Denote by $\eta$ the natural map in the relative Lie algebra cohomology $H^*(\mathfrak{g},\mathfrak{b})\rightarrow H^*(\mathfrak{g},\mathfrak{a}).$
\begin{proposition}\label{prop:Lie-cohom} If $G/H$ admits a solvable compact Clifford-Klein form, then the map
$$
\CD
0\not=H^n(\mathfrak{g},\mathfrak{b}) @>{\eta}>> H^n(\mathfrak{g},\mathfrak{a})
\endCD
$$
is injective.
\end{proposition}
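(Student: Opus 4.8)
\emph{Reduction.} First observe that $H^{n}(\mathfrak{g},\mathfrak{b})$ is one--dimensional. Indeed $C^{n}(\mathfrak{g},\mathfrak{b})=(\Lambda^{n}(\mathfrak{g}/\mathfrak{b})^{*})^{\mathfrak{b}}$ is one--dimensional, because $\operatorname{tr}\operatorname{ad}_{\mathfrak{g}/\mathfrak{b}}X=\operatorname{tr}\operatorname{ad}_{\mathfrak{g}}X-\operatorname{tr}\operatorname{ad}_{\mathfrak{b}}X=0$ for $X\in\mathfrak{b}$ (both $G$ and $B$ are unimodular), and $n$ is the top degree, so $H^{n}(\mathfrak{g},\mathfrak{b})=C^{n}(\mathfrak{g},\mathfrak{b})/dC^{n-1}(\mathfrak{g},\mathfrak{b})$ is at most one--dimensional; it is non-zero by the discussion preceding the Proposition, hence $H^{n}(\mathfrak{g},\mathfrak{b})=\mathbb{R}\cdot[\Phi]$. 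Therefore it suffices to prove that $\eta([\Phi])\neq 0$ in $H^{n}(\mathfrak{g},\mathfrak{a})$.

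\emph{Geometric interpretation.} By the Chevalley--Eilenberg isomorphism, $(C^{*}(\mathfrak{g},\mathfrak{b}),d)$ is the algebra of left $G$--invariant differential forms on the compact manifold $M_{B}=\Gamma_{H}\setminus G/B$, and $(C^{*}(\mathfrak{g},\mathfrak{a}),d)$ is the algebra of left $G$--invariant forms on $M_{A}:=\Gamma_{H}\setminus G/A$, in such a way that $\eta$ corresponds to pull-back along the natural projection $p:M_{A}\to M_{B},\ gA\mapsto gB$. Including invariant forms into all de Rham forms, one obtains a commutative square
$$
\CD
H^{n}(\mathfrak{g},\mathfrak{b}) @>{\eta}>> H^{n}(\mathfrak{g},\mathfrak{a})\\
@VVV @VVV\\
H^{n}(M_{B}) @>{p^{*}}>> H^{n}(M_{A})
\endCD
$$
in which the left vertical arrow sends $[\Phi]$ to the class of the invariant volume form of the compact oriented manifold $M_{B}$, which is non-zero because $\int_{M_{B}}\Phi>0$.

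\emph{The fibration $p$ and conclusion.} The proper, free right actions underlying the construction of $M_{B}$ exhibit $\Gamma_{H}\setminus G\to M_{B}$ as a principal $B$--bundle, so $p:M_{A}=(\Gamma_{H}\setminus G)/A\to M_{B}$ is the associated fibre bundle with fibre $B/A$. Since $B$ is a closed connected subgroup of the simply connected solvable group $AN$, it is itself simply connected and solvable, and $A\subset B$ is closed and connected; hence $B/A$ is diffeomorphic to a Euclidean space, in particular contractible, and the fibre bundle $p$ is a homotopy equivalence. Consequently $p^{*}$ is an isomorphism on cohomology, so in the square the composite $H^{n}(\mathfrak{g},\mathfrak{b})\to H^{n}(M_{B})\xrightarrow{p^{*}}H^{n}(M_{A})$ carries $[\Phi]$ to a non-zero class; by commutativity $\eta([\Phi])$ is non-zero as well, and since $H^{n}(\mathfrak{g},\mathfrak{b})=\mathbb{R}\cdot[\Phi]$ the map $\eta$ is injective. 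The step demanding the most care is checking that $p$ is a genuine fibre bundle with fibre $B/A$ --- which rests on the properness and freeness of the relevant $\Gamma_{H}$-- and $B$--actions --- together with the classical fact that a homogeneous space of a simply connected solvable Lie group by a closed connected subgroup is diffeomorphic to a Euclidean space; once these are granted, the remainder is bookkeeping with the Chevalley--Eilenberg isomorphism and the commutativity of the square.
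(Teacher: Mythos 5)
Your proof is correct and follows essentially the same route as the paper: the same commutative square comparing relative Lie algebra cohomology with the de Rham cohomology of $\Gamma_H\setminus G/B$ and $\Gamma_H\setminus G/A$, injectivity of the left vertical map via the invariant volume form, and contractibility of the fibre $B/A$ to make the bottom map an isomorphism. You merely supply details the paper leaves implicit (one-dimensionality of $H^n(\mathfrak{g},\mathfrak{b})$ from unimodularity, and why $B/A$ is Euclidean), so there is nothing substantive to add.
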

\begin{proof} Consider the commutative diagram
$$
\CD
H^n(\mathfrak{g},\mathfrak{b}) @>{\eta}>> H^n(\mathfrak{g},\mathfrak{a})\\
@V{f}VV @V{g}VV\\
H^n(M_B) @>{\pi^*}>> H^n(\Gamma_H\setminus G/A)
\endCD
$$
where $\pi: \Gamma_H\setminus G/A\rightarrow \Gamma_H\setminus G/B$ denotes the natural projection. Note that $\pi$ has contractible fiber, and $f$ is injective, which implies the injectivity of $\eta$.
\end{proof} 
\begin{proposition}\label{prop:so(4,4)} The $3$-symmetric spaces 
$$SO(4,4)/SU(1,2),\,SO(4,4)/G_{2(2)}$$
do not admit solvable compact Clifford-Klein forms.
\end{proposition}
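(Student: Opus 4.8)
The plan is to verify the hypothesis of Proposition \ref{prop:equal-rank} or, failing that, to apply Proposition \ref{prop:d} to these two specific spaces by a direct computation of the relevant Poincaré polynomials. First I would set up the data: for $G/H = SO(4,4)/SU(1,2)$ and $SO(4,4)/G_{2(2)}$, identify the compact duals $G_u = SO(8)$, and $H_u = SU(3)$ in the first case, $H_u = G_2$ (compact form) in the second, together with their maximal compact subgroups $K$ and $K_H$. Since $SO(4,4)$ has maximal compact $SO(4)\times SO(4)$, we have $K = SO(4)\times SO(4)$; and $K_H$ is the maximal compact of $SU(1,2)$, namely $U(2)$, respectively the maximal compact $SU(2)\times SU(2)$ of $G_{2(2)}$. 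Then I would compute $d = \dim H_u/K_H$: for $SU(3)/U(2)$ this is $\dim SU(3) - \dim U(2) = 8 - 4 = 4$ (this is $\mathbb{CP}^2$), and for $G_2/(SU(2)\times SU(2))$ this is $14 - 6 = 8$. So the target degrees are $d = 4$ and $d = 8$ respectively.

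Next I would compute the Poincaré polynomial $P(G_u/K_H, t)$ in the relevant degree. The space $G_u/K_H$ fibers over $G_u/K$ with fiber $K/K_H$, so one route is to use the Cartan algebra description of Theorem \ref{thm:g/h}: write $H^*(BG_u)$, $H^*(BK_H)$ as polynomial rings of Weyl invariants, compute the restriction map $j^*$, and read off the cohomology. For $SO(8)/U(2)$ and $SO(8)/(SU(2)\times SU(2))$ the ranks are $4$ and $2$ respectively against $\operatorname{rank} SO(8) = 4$, so one must check the ideal condition in Theorem \ref{thm:g/h}; if it holds, the Poincaré polynomial factors as a complete-intersection quotient times an exterior algebra on the remaining generators, and extracting the degree-$d$ coefficient is a finite computation. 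Alternatively — and this is probably cleaner — one can first check whether $\operatorname{rank} G = \operatorname{rank} H$: for $SO(4,4)$ the rank is $4$, while $SU(1,2)$ has rank $2$ and $G_{2(2)}$ has rank $2$, so the equal-rank hypothesis of Proposition \ref{prop:equal-rank} \emph{fails}, and we genuinely need the degree-$d$ vanishing argument of Proposition \ref{prop:d}.

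The key steps, in order, are: (i) fix the compact duals and the subgroups $K$, $K_H$ and compute $d = \dim H_u/K_H$ in each case; (ii) compute $P(G_u/K_H, t)$, most efficiently via the fibration $K/K_H \to G_u/K_H \to G_u/K$ combined with the known cohomology of the compact symmetric space $G_u/K = SO(8)/(SO(4)\times SO(4))$ and of the fiber, or directly via the Cartan model of Theorem \ref{thm:g/h}; (iii) check that the coefficient of $t^d$ in $P(G_u/K_H, t)$ vanishes; (iv) invoke Proposition \ref{prop:d} to conclude. I expect step (ii)–(iii) to be the main obstacle: the cohomology of $SO(8)/U(2)$ and of $SO(8)/(SU(2)\times SU(2))$ is not entirely standard, the triality phenomena of $SO(8)$ can make the restriction maps on Weyl invariants subtle, and one must be careful that the embedding $H_u \hookrightarrow G_u$ dual to the given $3$-symmetric pair is the correct one (these arise from order-$3$ automorphisms of $SO(8)$, and the relevant $SU(3)$ and $G_2$ sit inside $SO(8)$ in specific ways). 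The payoff is that once the degree-$d$ Betti number is shown to be zero, Proposition \ref{prop:d} immediately rules out \emph{all} compact Clifford-Klein forms of these two spaces, not merely the solvable or amenable ones.
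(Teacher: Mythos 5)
Your plan is to run both spaces through Proposition \ref{prop:d}, i.e.\ to show that the Betti number of $G_u/K_H$ vanishes in degree $d=\dim H_u/K_H$. This is not the route the paper takes (the paper uses Proposition \ref{prop:Lie-cohom}: the top relative Lie algebra cohomology $H^n(\mathfrak{g},\mathfrak{b})$ of the syndetic hull must inject into $H^n(\mathfrak{g})\cong H^n(\mathfrak{so}(8))$, and the degrees $n=16$ and $n=20$ forced by Theorem \ref{thm:kob-crit} are not realized by the exterior algebra on generators of degrees $3,7,7,11$), and more importantly your route breaks down on the first space. For $SO(4,4)/SU(1,2)$ you correctly get $H_u/K_H=SU(3)/U(2)=\mathbb{CP}^2$ and $d=4$, but the required vanishing $H^4(SO(8)/U(2))=0$ is false. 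In the Cartan algebra $\bigl(H^*(BU(2))\otimes\Lambda(y_3,y_7,y_7',y_{11}),\,d\bigr)$ the degree-$4$ part of $H^*(BU(2))=\mathbb{R}[c_1,c_2]$ is two-dimensional (spanned by $c_1^2$ and $c_2$), the differential vanishes on it, and the only incoming coboundary is $dy_3=j^*(f_1)$, which can kill at most one dimension; hence $b_4(SO(8)/U(2))\geq 1$ no matter how $U(2)$ sits inside $SO(8)$. (Equivalently: if the ideal condition of Theorem \ref{thm:g/h} holds, the quotient $\mathbb{R}[c_1,c_2]/(j^*f_1,j^*f_2)$ has Poincar\'e polynomial $1+t^2+t^4+t^6$.) So the hypothesis of Proposition \ref{prop:d} is simply not satisfied, and step (iii) of your plan cannot be completed; you would be left having to show directly that $i_*[\mathbb{CP}^2]=0$ inside a nonzero $H_4$, which is a different and harder statement that you give no argument for.

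For the second space the numerology is more favorable ($K_H$ has only degree-$4$ generators in $H^*(BK_H)$, so a computation parallel to the one in Proposition \ref{prop:g2} plausibly gives $b_8(G_u/K_H)=0$), but you do not carry it out, and you would still need to pin down the embedding of the maximal torus of $K_H$ in that of $SO(8)$ and verify the ideal condition. The honest summary is: your approach is a genuinely different one that, if it worked, would rule out \emph{all} compact Clifford--Klein forms, but it provably cannot work for $SO(4,4)/SU(1,2)$; the paper avoids this by exploiting solvability of $\Gamma$ (via the syndetic hull and Kobayashi's dimension criterion) to reduce to the vanishing of $H^{16}(\mathfrak{so}(8))$ and $H^{20}(\mathfrak{so}(8))$, which is immediate from the degrees of the primitive generators.
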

\begin{proof} By Proposition \ref{prop:Lie-cohom} the map 
$$H^n(\mathfrak{a},\mathfrak{b})\rightarrow H^n(\mathfrak{g})$$
is injective in degree $n=16$ for $SO(4,4)/SU(1,2)$ and in degree $n=20$ for the case $SO(4,4)/G_{2(2)}$. 
The latter follows from Theorem \ref{thm:kob-crit}. Indeed, one can see that
$\dim\mathfrak{p}=16$ for $\mathfrak{g}=\mathfrak{so}(4,4)$ and $\dim\mathfrak{p}_H=8$ for $\mathfrak{h}=\mathfrak{g}_{2(2)}$. Since $\Gamma\setminus G/H$ is compact, Theorem \ref{thm:kob-crit} implies $\dim\mathfrak{b}=8$. Hence $\dim (SO(4,4)/B)=20$.
By Lemma 3.7 and Proposition 3.9 in \cite{KO} $H^*(\mathfrak{g})$ is isomorphic to $H^*(\mathfrak{g}_u)$. Here $\mathfrak{g}_u=\mathfrak{s}\mathfrak{o}(8)$, which yields $\dim\mathfrak{g}_u=28$, and the following degrees of the primitive elements in the Lie algebra cohomology: $3, 7, 11, 7$.  This necessarily implies $H^{16}(\mathfrak{g})=0$ and $H^{20}(\mathfrak{g})=0$, a contradiction.   
\end{proof}
\begin{proposition}\label{prop:g2} The 3-symmetric space $SO(3,5)/G_{2(2)}$ does not admit compact Clifford-Klein forms.
\end{proposition}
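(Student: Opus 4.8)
The plan is to run the same strategy used in Proposition~\ref{prop:so(4,4)}, now applied to $G/H=SO(3,5)/G_{2(2)}$, but pushing it to rule out \emph{all} compact Clifford-Klein forms rather than merely the solvable ones; this is possible here because the topological obstruction (Theorem~\ref{thm:tncz}) alone suffices. First I would record the relevant numerics: for $\mathfrak g=\mathfrak{so}(3,5)$ one has $d(G)=\dim\mathfrak p=15$, while for $\mathfrak h=\mathfrak g_{2(2)}$ one has $d(H)=\dim\mathfrak p_H=8$; hence by Theorem~\ref{thm:kob-crit} any closed connected subgroup $U$ acting properly and cocompactly satisfies $d(U)=7$, and the relevant fibration in Theorem~\ref{thm:tncz} has fiber $H_u/K_H$ of dimension $d(H)=8$. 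Note $G_u=SO(8)$, $H_u=G_{2}$ (compact), $K_H=SU(2)\cdot SU(2)$ the maximal compact of $G_{2(2)}$, and $K=SO(3)\times SO(5)$.

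Next I would identify which of the two equivalent TNCZ conditions is cheapest to contradict. The cleanest route mirrors Proposition~\ref{prop:d}: factor the fiber inclusion $i\colon H_u/K_H\to G_u/K_H$ through $H_u/K_H \xrightarrow{j} G_u/K_H \xrightarrow{p} G_u/K$ and show $i_*[H_u/K_H]=0$ in $H_8(G_u/K_H)$ by showing the relevant degree-$8$ homology (or the Poincar\'e polynomial coefficient) vanishes, i.e. apply Proposition~\ref{prop:d} with $d=8$. To compute $H^*(G_u/K_H)=H^*(SO(8)/(SU(2)\cdot SU(2)))$ I would use the Cartan algebra machinery of Theorem~\ref{thm:g/h}: $H^*(BSO(8))=\mathbb R[p_1,p_2,p_3,e]$ with generators in degrees $4,8,12,8$, and $H^*(B(SU(2)\cdot SU(2)))$ a polynomial ring on two degree-$4$ classes (with possibly a relation, depending on the exact embedding of $SU(2)\cdot SU(2)$ as the maximal compact of $G_{2(2)}$); then compute the restriction map $j^*$ and the cohomology of the resulting differential graded algebra $(H^*(BK_H)\otimes\Lambda(y_1,\dots,y_4),d)$, reading off the coefficient of $t^8$ in its Poincar\'e polynomial. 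Alternatively, if that coefficient does \emph{not} vanish, I would instead test injectivity of $\pi^*\colon H^*(G_u/H_u)\to H^*(G_u/K_H)$ directly, using that $H^*(G_u/H_u)=H^*(SO(8)/G_2)$ is the well-known $3$-symmetric space cohomology (it is the quotient $S^7\times S^7$-type computation, or rather known explicitly from Wolf--Gray), and exhibit a nonzero class in $H^*(SO(8)/G_2)$ that pulls back to zero in $H^*(SO(8)/(SU(2)\cdot SU(2)))$.

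The main obstacle I anticipate is pinning down the precise restriction homomorphism $\rho^*(K_H,G_u)\colon H^*(BSO(8))\to H^*(B(SU(2)\cdot SU(2)))$, equivalently the restriction of Weyl-invariant polynomials: this requires knowing how the maximal torus of $SU(2)\cdot SU(2)\subset G_{2(2)}$ sits inside the maximal torus $\mathfrak t$ of $SO(8)$ (that is, the branching of the $8$-dimensional vector representation of $SO(8)$ restricted to this $SU(2)\cdot SU(2)$), and there is a genuine bookkeeping subtlety because $G_{2(2)}$ embeds in $SO(3,5)$ via the $7$-dimensional representation plus possibly a trivial summand, so one must be careful whether one is looking at $\mathbf 7\oplus\mathbf 1$ or $\mathbf 8$ and how that restricts further to $SU(2)\cdot SU(2)$. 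Once that weight data is fixed, the rest is the finite-dimensional linear algebra of checking whether $j^*(f_{r+1}),\dots,j^*(f_n)$ lie in the ideal generated by $j^*(f_1),\dots,j^*(f_r)$ (so that Theorem~\ref{thm:g/h} applies) and then a degree count; I expect this to go through and to produce a vanishing in degree $8$ (or a non-injective $\pi^*$), yielding via Theorem~\ref{thm:tncz} the desired contradiction with the existence of any compact Clifford-Klein form. As a sanity check I would verify consistency with Proposition~\ref{prop:equal-rank}: since $\operatorname{rank}SO(3,5)=4=\operatorname{rank}G_{2(2)}+?$ — in fact $\operatorname{rank}G_{2(2)}=2\neq 4$, so the equal-rank obstruction does \emph{not} apply here and the TNCZ argument is genuinely needed.
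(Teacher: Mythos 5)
Your strategy is exactly the paper's: identify $G_u=SO(8)$, $H_u=G_2$, $K_H$ the maximal compact of $G_{2(2)}$ with $\dim H_u/K_H=8$, and apply Proposition~\ref{prop:d} by computing $H^*(G_u/K_H)$ via the Cartan algebra of Theorem~\ref{thm:g/h} to show $H^8(G_u/K_H)=0$. Your numerics ($d(G)=15$, $d(H)=8$, $d=8$) and your sanity check that the equal-rank obstruction does not apply are all correct.

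The one genuine gap is precisely the point you flag as ``the main obstacle'' and then defer: the embedding $\mathfrak t_{K_H}\subset\mathfrak t$ and the resulting restriction map $j^*$. Without that, the claim that the degree-$8$ coefficient vanishes is only a hope, and it is not automatic -- indeed $H^*(BK_H)$ already has a two-dimensional space in degree $8$ before dividing by the ideal, so everything hinges on which relations $j^*(f_1),j^*(f_2)$ actually impose. The paper resolves this by invoking Wolf--Gray (Theorem 5.5) to see that $\mathfrak h_u=\mathfrak g_2$ arises as the fixed algebra of an \emph{outer} (triality) automorphism of $D_4$, and then quoting Terzi\'{c}'s explicit description of the Cartan subalgebra of such a subalgebra, $\mathfrak t'_H=\{x_1=0,\ x_2+x_3=x_4\}$. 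With that weight data one computes $g_4=x_2^2+x_3^2+(x_2+x_3)^2$ and $g_8=x_2^2x_3^2+(x_2^2+x_3^2)(x_2+x_3)^2$, checks $g_{12}\in(g_4,g_8)$ so that Theorem~\ref{thm:g/h} applies, and deduces $x_2^2+x_3^2=-x_2x_3$ and $x_2^2x_3^2=0$ in cohomology, whence $H^8(G_u/K_H)=0$. So your plan is the right one, but the proof is not complete until you supply this torus embedding; your worry about $\mathbf 7\oplus\mathbf 1$ versus $\mathbf 8$ is exactly the ambiguity that the outer-automorphism description removes.
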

\begin{proof}
 We will show that $i_*[H_u/K_H]$ is zero in $H_*(G_u/K)$. Then the proof will follow from Theorem \ref{thm:tncz}.
 In our case
 $$G_u=SO(8), K=SO(3)\times SO(5), H_u=G_2, K_H=H_u\cap K=SO(3)\times SO(3).$$
Note that $d=\dim (G_2/SO(3)\times SO(3))=8.$  Hence the fundamental homology class $[H_u/K_H]=[G_2/(SO(3)\times SO(3))]\in H_8(H_u/K_H)$. 
To show the required equality, we  plainly calculate the cohomology algebra of $G_u/K_H$, calculating the corresponding Cartan algebra. 
 First, we need to calculate the Cartan subalgebra of $\mathfrak{t}_H$ and the embedding $\mathfrak{t}_H\subset\mathfrak{t}$. Here we can use the calculation from \cite{T} on pages 150-151 with the following modification. Note that $\operatorname{rank}\,H_u=\operatorname{rank}\,K_H$. It follows that one can choose the same Cartan subalgebra $\mathfrak{t}_H$ for $\mathfrak{k}_H$ and $\mathfrak{h}_u$. The following was shown in \cite{T}. Let $\mathfrak{g}_u$ be a simple compact Lie algebra of type $D_4$.  Then, any Cartan subalgebra of a subalgebra $\mathfrak{h}_u\subset\mathfrak{g}_u$ generated by an {\it outer automorphism} can be described by the formula
 $$\mathfrak{t}'_H=\{ (x_1,...,x_4)\,|\,x_1=0, x_2 + x_3=x_4\}.$$
 Note that Theorem 5.5 in \cite{WG} implies that $\mathfrak{h}_u$ is determined by an outer automorphism of $\mathfrak{g}_u$, therefore, we can use the above formula. 
This justifies the calculation below.
We get
$$\mathbb{Q}[\mathfrak{t}]^{W(SO(8))}=\mathbb{Q}[f_1,...,f_4]$$
where
$$f_1=x_1^2+x_2^2+x_3^2+x_4^2,\,
 f_2=x_1^2x_2^2+x_1^2x_3^2+x_1^2x_4^2+x^2_2x_3^2+x_2^2x_4^2+x_3^2x_4^2$$
$$f_3=x_1^2x_2^2x_3^2+x_1^2x_2^2x_4^2+x_1^2x_3^2x_4^2+ x_2^2x_3^2x_4^2,\,f_4=x_1x_2x_3x_4.$$
In the same way
$$\mathbb{Q}[\mathfrak{t}_H]^{W(SO(3)\times SO(3))}=\mathbb{Q}[x_2^2,x_3^2].$$
Finally, this yields
$$H^*(SO(8)/SO(3)\times SO(3))=\mathbb{Q}[x_2^2,x_3^2]/(g_4,g_8)\otimes\Lambda (y_7,y_{11}), \eqno (*)$$
where 
$$f_1|_{\mathfrak{t}_H'}=g_4=x_2^2+x_3^2+(x_2+x_3)^2, $$
$$f_2|_{\mathfrak{t}_H'}=g_8=x_2^2x_3^2+(x_2^2+x_3^2)(x_2+x_3)^2$$
$$f_3|_{\mathfrak{t}_H'}=g_{12}=x_2^2x_3^2(x_2+x_3)^2,$$
$$f_4|_{\mathfrak{t}_H'}=\tilde g_8=0.$$
Note that $(*)$ follows from the observation that $g_{12}\in (g_4,g_8).$
Observe the following. Since $g_4=x_2^2+x_3^2+(x_2+x_3)^2$, it follows that 
$$x_2^2+x_3^2+x_2^2+2x_2x_3+x_3^2=0 \,\,\text{in}\,\,H^*(G_u/K_H)$$
which yields $x_2^2+x_3^2=-x_2x_3$ in the cohomology.  Looking at the expression of $g_8$, one can see that $x_2^2x_3^2=0$ in $H^*(G_u/K_H)$. It follows that $H^8(G_u/K_H)=0$, and the proof follows from Proposition \ref{prop:d}.
\end{proof}
\vskip6pt
\noindent {\bf Completion of proof of the main theorem} It follows from the Tits alternative that if $G/H$ admits a compact amenable Clifford-Klein it also admits a solvable Clifford-Klein form. Thus, it is sufficient to eliminate the possibility of getting the latter. By Proposition \ref{prop:equal-rank} we should consider only $3$-symmetric spaces with simple Lie groups of non-compact type and with $\operatorname{rank}\,G>\operatorname{rank}\,H$. The latter are contained in Table 7.14 of \cite{WG}. Examining this Table we get the following list:
$$SO(4,4)/SU(1,2),\,SO(4,4)/G_{2(2)},\,SO(3,5)/G_{2(2)}, Spin(1,7)/G_2.$$
The first three cases are eliminated by Proposition \ref{prop:so(4,4)} and Proposition \ref{prop:g2}. The case of $Spin(1,7)/G_2$ should not be considered, because $G_2$ is compact. 
\hfill$\square$
\begin{remark}{\rm In Table 7.14 in \cite{WG} the authors use a bit different notation: $SO^m(n)$ from this Table corresponds to our $SO(m,n-m)$, and $G_2^*$ to $G_{2(2)}$ (see also page 118 in \cite{WG}). For obvious reasons, we exclude non-simple $G$ and complex Lie groups.}
\end{remark}

\end{document}